\newtheorem{definition-lemma}[theorem]{Definition/Lemma}
\newtheorem{definition-explanation}[theorem]{Definition/Explanation}
\newtheorem{explanation-definition}[theorem]{Explanation/Definition}
\newtheorem{lemma-definition}[theorem]{Lemma/Definition}
\numberwithin{equation}{subsection}
\newtheorem{stheorem}{Theorem}[section]
\newtheorem{sdefinition}[stheorem]{Definition}
\newtheorem{sdefinition-lemma}[stheorem]{Definition/Lemma}
\newtheorem{sdefinition-explanation}[stheorem]{Definition/Explanation}
\newtheorem{sexplanation-definition}[stheorem]{Explanation/Definition}
\newtheorem{slemma}[stheorem]{Lemma}
\newtheorem{slemma-definition}[stheorem]{Lemma/Definition}
\newtheorem{scorollary}[stheorem]{Corollary}
\newtheorem{sremark}[stheorem]{\it Remark}
\newcommand{\Ad}{\mbox{\it Ad}}
 \newcommand{\Azscriptsize}{\mbox{\scriptsize\it A$\!$z}}
\newcommand{\Diag}{\mbox{\it Diag}\,}
\newcommand{\End}{\mbox{\it End}\,}
 \newcommand{\smallEnd}{\mbox{\small\it End}\,}
\newcommand{\Endsheaf}{\mbox{\it ${\cal E}\!$nd}\,}
\newcommand{\GL}{\mbox{\it GL}}
\newcommand{\Id}{\mbox{\it Id}}
\newcommand{\IGL}{\mbox{\it IGL}}
\newcommand{\Ker}{\mbox{\it Ker}\,}
\newcommand{\Rep}{\mbox{\it Rep}\,}
\newcommand{\Span}{\mbox{\it Span}\,}
\newcommand{\Spec}{\mbox{\it Spec}\,}
 \newcommand{\boldSpec}{\mbox{\it\bf Spec}\,}
 \newcommand{\smallSpec}{\mbox{\small\it Spec}\,}
\newcommand{\Stab}{\mbox{\it Stab}\,}
\newcommand{\SU}{\mbox{\it SU}}
\newcommand{\Supp}{\mbox{\it Supp}\,}
 \newcommand{\scriptsizeSupp}{\mbox{\scriptsize\it Supp}\,}
\newcommand{\Sym}{\mbox{\it Sym}}
\newcommand{\Wilson}{\mbox{\it\tiny Wilson}}
\newcommand{\dimm}{\mbox{\it dim}\,}
\newcommand{\orbil}{\mbox{\it orbi.l\,}}
\newcommand{\pr}{\mbox{\it pr}}
\newcommand{\pt}{\mbox{\it pt}}
\newcommand{\longlongmapsto}
 {\raisebox{-.22ex}{$\vdash$}\hspace{-.8ex}
   \raisebox{.51ex}{\rule{6em}{.11ex}}\hspace{-.8ex}\rightarrow}
\begin{document}

\enlargethispage{23cm}

\begin{titlepage}

$ $

\vspace{-1cm}

\noindent\hspace{-1cm}
\parbox{6cm}{\small December 2008}\
   \hspace{8cm}\
   \parbox[t]{5cm}{yymm.nnnn [math.AG]\\ D(3): D0, ADE.}

\vspace{2cm}

\centerline{\large\bf
 Azumaya structure on D-branes}
\vspace{1ex}
\centerline{\large\bf
 and resolution of ADE orbifold singularities revisited:}
\vspace{1ex}
\centerline{\large\bf
 Douglas-Moore vs.\ Polchinski-Grothendieck}

\bigskip

\vspace{3em}

\centerline{\large
  Chien-Hao Liu
  \hspace{1ex} and \hspace{1ex}
  Shing-Tung Yau
}

\vspace{6em}

\begin{quotation}
\centerline{\bf Abstract}

\vspace{0.3cm}

\baselineskip 12pt  
{\small
 In this continuation of [L-Y1] and [L-L-S-Y],
  we explain
   how the Azumaya structure on D-branes
    together with a netted categorical quotient construction
  produces the same resolution of ADE orbifold singularities
  as that arises as the vacuum manifold/variety of
   the supersymmetric quantum field theory
    on the D-brane probe world-volume,
   given by Douglas and Moore [D-M] under the string-theory contents
    and
   constructed earlier through hyper-K\"{a}hler quotients
    by Kronheimer and Nakajima.
 This is consistent with the moral behind this project
  that Azumaya-type structure on D-branes themselves
  -- stated as the Polchinski-Grothendieck Ansatz in [L-Y1] --
  gives a mathematical reason for many
  originally-open-string-induced properties of D-branes.
} 
\end{quotation}

\vspace{10em}

\baselineskip 12pt
{\footnotesize
\noindent
{\bf Key words:} \parbox[t]{14cm}{
  D-brane probe, Polchinski-Grothendieck Ansatz, Azumaya scheme,
  morphism, orbifold, resolution of singularity,
  ADE orbifold singularity,
  moduli stack, netted categorical quotient.
 }} 

\bigskip

\noindent {\small MSC number 2000:
 14E15, 81T30; 14A22, 14D21, 81T75.
} 

\bigskip

\baselineskip 10pt
{\scriptsize
\noindent{\bf Acknowledgements.}
 We thank
  Andrew Strominger and Cumrun Vafa,
   who continue to influence our understanding of topics in string theory.
 C.-H.L.\ thanks in addition
  G\'{e}rard Laumon
   for an illumination on stacks;
  Si Li and Ruifang Song
   for participating in the biweekly
    Saturday D-brane working seminar, spring 2008;
  Frederik Denef, Duiliu-Emanuel Diaconescu, Liang Kong,
  Eric Sharpe, Andrew Tolland
   for communications;
  David Morrison for illuminations of his talk;
  Kefeng Liu, Xiaojun Liu,
  Wenxuan Lu, Li-Sheng Tseng, Peng Zhang,
  Jian Zhou
   for conversations;
  organizers, participants, and staff of the workshop
   ``Algebraic Geometry and Physics, Hangzhou, 2008"
   at Zhejiang University, China;
  Ling-Miao Chou for moral support.
 The project is supported by NSF grants DMS-9803347 and DMS-0074329.
} 

\end{titlepage}

\newpage
\begin{titlepage}

$ $

\vspace{12em} 

\centerline{\it
 In memory of Professor Sidney Coleman, 1937 - 2007.$^{\dagger}$}

\vspace{32em}

\baselineskip 10pt
{\scriptsize
\noindent$^{\dagger}${\bf Special tribute from C.-H.L.}
 I was very lucky to to get the chance
  to attend the one-year quantum field theory course,
  Physics 253a and Physics 253b,
  given by Prof.\ Coleman, fall 2000 - spring 2001 at Harvard,
  after having studied his book, ``Aspects of Symmetry",
   and heard of his legendary QFT lectures/course
   from Jacques Distler and Brian Greene on various occasions.
 The lecture notes of the whole year (705 pp.)\ are very impressive
  and
 the style of giving stories/anecdotes from his reflections/witness
  of the development of quantum field theory and particle physics
  up to 1970s along with the lectures
  is very unique.
 I still remember the discussion with him,
  when he lectured on the asymptotic freedom of
  nonabelian gauge field theories near the end of that academic year,
  on Wilson's theory-space and renormalization group flow.
 These latter two notions, together with rigidity of supersymmetry,
   are very fundamental to understanding stringy dualities
  and should be regarded as providing a master moduli space
   that unifies the various moduli spaces in mathematics.
 They still await mathematicians to unravel.
} 

\end{titlepage}

\newpage
$ $

\vspace{-4em}  

\centerline{\sc
 D-Brane Probe Resolution of ADE Orbifold Singularities}

\vspace{2em}

\baselineskip 14pt  

\begin{flushleft}
{\Large\bf 0. Introduction and outline.}
\end{flushleft}
In this continuation of [L-Y1] and [L-L-S-Y],
 we explain
  how the Azumaya structure on D-branes
   together with a netted categorical quotient construction
 produces the same resolution of ADE orbifold sigularities
 as that arises as the vacuum manifold/variety of
  the open-string-induced supersymmetric quantum field theory
   on the D-brane probe world-volume,
  given by Douglas and Moore [D-M]
   under the string-theory contents  and
  constructed earlier through hyper-K\"{a}hler quotients
   by Kronheimer and Nakajima ([Kr], [K-N], [Na]).

\bigskip

\begin{flushleft}
{\bf Azumaya-type structure on D-branes.}
\end{flushleft}
([Po2: vol.~I: Sec.~8.7; vol.~II: Chap.~13],
 [L-Y1: Sec.~1, Sec.~2], and [L-L-S-Y: Sec.~1].)
Originally, a {\it  D-brane} in string theory
 is by definition an embedding of a manifold/variety/cycle
 in the space-time that serves as a boundary condition
 for open-strings moving in the space-time.
This operational definition allows string-theorists to deduce
 properties of, quantum field theories on, and dynamics of D-branes.
When a few D-branes are stacked together,
 open-string dictates
 that a certain noncommutative geometry emerges.
This noncommutativity feature
  when viewed from Grothendieck's construction/notion of
   a ``geometry" and their morphisms (cf.\ [Ha]),
 taking into account the fact that
  unital associative rings are more natural to do geometries
   from local to global via gluing,
 says that:

\bigskip

\noindent
{\bf Polchinski-Grothendieck Ansatz
     [D-brane: noncommutativity].}\footnote{{\it For
       string-theorists concerning the naming of this ansatz}$\,$:
      This ansatz is nothing more than a rephrasing of
       a few related paragraphs in Polchinski's lecture notes
       or textbooks ([Po2: vol.~I, Sec.~8.7])
       in terms of Grothendieck's aspect of
       local geometries versus coordinate rings,
       cf.\ [Ha] and [L-Y1: Sec.~2.2].
      Together with the fact that
        Polchinski was an early pioneer on D-branes
        and has made a special contribution to the understanding
        of the role of D-branes in string theories [Po1],
       we decided to use this name from the very start of the project.
      A thought came to us after [L-Y1] as whether this is the best name
       to reflect the content of this ansatz.
      Note that this ansatz has nothing to do with supersymmetry.
      It reflects actually only the nature of the enhancement
       of open-string-induced massless spectrum on stacked D-branes,
       particularly the scalar fields thereupon that describe
       deformations of D-branes in space-time.
      This stringy feature on D-branes is in turn a reflection
       of the fact that
       the tension of a fundamental string is a constant in nature
        and, hence, in particular
       the mass of a fundamental open string and
         of the spectrum it creates on D-branes
        are proportional to its length.
      Thus, after seeking the advice of a string theorist
       - who himself is also a figure on the study of branes before 1995
          and who agreed with our naming -,
       we fixed on our original name:
       one string-theorist vs.\ one mathematician,
        with each giving a revolutionary change of the landscape
        of their respective field.
      This naming also takes into account that, since 1995,
       the realization of D-``branes" has broadened considerably and
       this ansatz addresses only the region of the related
       Wilson's theory-space from string theory
       where a D-brane remains a brane.
      See [L-Y1] for more comments.
      Special thanks to Lubos Motl
       for comments that came to my attention accidentally,
       which propelled me to re-think about this naming
       while preparing this manuscript and, hence,
       led to this footnote.
      \hspace{3em}---$\,$ Sincerely, C.-H.L. 2008.12.21.
      }
 {\rm ([L-Y1: Sec.~2.2].)}
 {\it
 A D-brane (or D-brane world-volume) $X$ carries an Azumaya-type
  noncommutative structure locally associated to a function ring
  of the form $M_r(R)$ for some $r\in {\Bbb Z}_{\ge 1}$ and
  (possibly noncommutative) ring $R$.
 Here, $M_r(R)$ is the $r\times r$ matrix-ring over $R$.
} 

\bigskip

\noindent
Based on this ansatz, a D-brane in a space(-time) $Y$
 in the sense of Polchinski and others in the decade 1986-1995
 can be rephrased prototypically as
 a morphism $\varphi: (X^{A\!z},{\cal E})\rightarrow Y$
  from an Azumaya manifold/scheme $X^{A\!z}$
   with a fundamental module ${\cal E}$
  to $Y$,
 cf.\ [L-Y1: Sec.~1.1, Sec.~1.2, Definition~2.2.3] and
  [L-L-S-Y: Sec.~1, Sec.~2.1, Definition~2.1.2].

Deformations of such morphisms reproduce
 the pattern of gauge-Higgsing/gauge-un-Higgsing behaviors of D-branes
 ([L-Y1: Remark~3.2.4, Sec.~4],
  [L-L-S-Y: Remark 2.1.7, {\sc Figure} 2-1-1], and [Liu1])  and
the moduli space ${\mathfrak M}^{\mbox{\scriptsize D-brane}}(Y)$
 of such morphisms reveals a feature
 as a master moduli space that simultaneously incorporates
 several standard moduli spaces in commutative algebraic geometry
 ([L-Y1: Sec.~4], [L-L-S-Y: Sec.~4], and [Liu2]).
This latter feature is consistent
 with the very robust and versatile nature/role of D-branes
 to serve as a medium/broker/catalyst for various stringy dualities
 since Polchinski's work [Po1] in year 1995.

In this sequel to [L-Y1] and [L-L-S-Y],
we proceed to justify the third known feature of D-branes, namely
  the ability to resolve singularities of the target space(-time),
 along the line of the Polchinski-Grothendieck Ansatz.

\bigskip

\begin{flushleft}
{\bf D-brane probe resolution of ADE orbifold singularities
     \`{a} la Douglas and Moore.}
\end{flushleft}
In the work [D-M] of Douglas and Moore\footnote{Readers
                   are referred to the references of
                   [D-M], [J-M], [D-G-M], [G-L-R] for more literatures
                   on related stringy works and notions that influence
                   the development.
                  Our work proceeds specifically with [D-M] in mind
                   as the inner/compactified part of their setting
                   is a D0-brane moving on an orbifold.
                  The latter is the case we will study.},
the $d=6$, $N=1$ supersymmetric effective field theory (SQFT)
 on the D$5$-branes world-volume $X$
  that is embedded in the product space-time
  ${\Bbb M}^{\,5+1}\times ({\Bbb C}^2/{\Bbb Z}_r)$
  as ${\Bbb M}^{\,5+1}\times\{{\mathbf 0}\}$
 is studied in detail.
Here ${\Bbb M}^{\,5+1}$ is the $6$-dimensional Minkowski
space-time
 and ${\Bbb C}^2/{\Bbb Z}_r$ is the quotient of ${\Bbb C}^2$
  by a ${\Bbb Z}_r$-action
   via a group-embedding ${\Bbb Z}_r\hookrightarrow \SU(2)$.
The {\it massless multiplets} of this $d=6$, $N=1$ SQFT on the brane
 consists of$\,$:
  \begin{itemize}
   \item[$\cdot$]
    [{\it closed-superstring-induced sector}]$\;$
    those from the {\it Kaluza-Klein compactification/reduction}
     of a chosen $d=10$ superstring theory
     on the internal ${\Bbb C}^2/{\Bbb Z}_r$,
      identifying the embedded D$5$-brane world-volume
       with the $d=6$ effective space-time,

   \item[$\cdot$]
    [{\it open-superstring-induced sector}]$\;$
    those {\it from open strings} with one or both end-points
     attached to the D-brane.
  \end{itemize}
The {\it twisted sectors from orbifolding} via the ${\Bbb Z}_r$-action
 are taken into account.
In particular, the {\it scalar fields that describe the deformations
 of this D-brane}, sitting at the orbifold singularity,
 are contained in the hypermultiplets of the theory.
The combinatorial type of this field content
 on the D-brane world-volume is coded in a {\it quiver diagram}.

The {\it Lagrangian} for these supermultiplets
  that governs the low-energy dynamics of the D-brane
 includes a Born-Infeld term, a term for hypermultiplets,
  a Chern-Simons term, and the SUSY completion of these terms.
The {\it space of vacua} for this D-brane
 - computed from
   \begin{itemize}
    \item[$\cdot$]
    the {\it potential} for the hypermultiplets,

    \item[$\cdot$]
    a path-integral manipulation
     to {\it integrate out the auxiliary $D$-fields}
     of the vector multiplets in the Fayet-Iliopoulos terms,

    \item[$\cdot$]
    a {\it condensation} of the scalar fields from the NS-NS sector
     in the Kaluza-Klein compactification
   \end{itemize} -
 gives a resolution of the singular space ${\Bbb C}^2/{\Bbb Z}_r$
  for a generic choice of the vacuum expectation value (vev)
  for the condensation.
In other words,
 {\it the geometry of the inner space ${\Bbb C}^2/{\Bbb Z}_r$
  at an ultrashort distance in string theory
 as seen by this D$5$-brane},
    transverse to ${\Bbb C}^2/{\Bbb Z}_r$
     at the orbifold singularity,
 {\it can be different from the original ${\Bbb C}^2/{\Bbb Z}_r$
      to begin with}.
In particular, this geometry seen by the D-brane can be smooth.
This gives rise to the phenomenon of
 {\it D-brane probe resolution of singularities} of a space.

A generalization of [D-M] to all ADE orbifolds ${\Bbb C}^2/\Gamma$
 is given later in the work [J-M] of Johnson and Myers
 from a slightly different D-brane aspect but with similar conclusion
 on the D-brane probe resolution of ADE orbifold singularities.
Further studies along this line, e.g.\ [D-G-M] and [G-L-R],
 were made for other spaces with orbifold singularities.

\bigskip

\begin{flushleft}
{\bf Geometric-invariant-theory (GIT) quotient
     vs.\ netted categorical quotient.}
\end{flushleft}
The construction of the vacuum manifold/variety in Douglas-Moore
[D-M]
 matches with the hyper-K\"{a}hler quotient construction of
 Kronheimer and Nakajima ([Kr], [K-N]; see also [Na]).
In the algebrao-geometric setting, this is closely related to
 Munmford's geometric-invariant-theory (GIT) quotient construction
 ([M-F-K]).

In general, for an algebraic group $G$ acting on a scheme $Z$
  with a categorical quotient $Z/\!\sim$,
 the stabilizers of the action can be too big to allow the application
  of the GIT construction to produce other quotient spaces from $Z$.
One can try to enhance $Z$ to another $G$-scheme $\widetilde{Z}$
  with $G$-equivariant morphism $\pi:\widetilde{Z}\rightarrow Z$
 to reduce the stabilizers and perform the GIT construction
 on $\widetilde{Z}$.
When it works,
different choices of $\widetilde{Z}$ with a $G$-linearized line bundle
  $\widetilde{L}^{\chi}$ on $\widetilde{Z}$
 give rise then to a net of $G$-invariant
  $\pi(\widetilde{Z}^{ss}(\widetilde{L}^{\chi}))\subset Z$,
 whose categorical quotients
  form now a net of quotient spaces with a natural morphism to $Z/\!\sim$.
Here, $\widetilde{Z}^{ss}(\widetilde{L}^{\chi})$ is the semistable locus
 of $\widetilde{Z}$ with respect to the line bundle $\widetilde{L}$
  on $\widetilde{Z}$ with the $G$-linearization $\chi$.
We will call this procedure
 a {\it netted categorical quotient construction}
 on the $G$-scheme $Z$.
In good cases, one can apply this to produce
 a net of birational models for the (usually bad/singular)
 scheme $Z/\!\sim$.

Readers are referred to [M-F-K] and [Ne]
 for detailed discussions
 on moduli problems, orbit spaces, and the GIT construction.

\bigskip

\begin{flushleft}
{\bf D-brane probe and birational geometry.}
\end{flushleft}
Associated to a combinatorial type
  (e.g.\ dimension and the number of susy's)
 of quantum field theories is
 the {\it Wilson's theory-space} ${\cal S}_{\Wilson}$
 that parameterizes all the quantum field theories
 of the given combinatorial type.
In simple cases, ${\cal S}_{\Wilson}$ is locally parameterized by
 the tuple of coupling constants in the Lagrangian for
 the quantum field theories.
For convenience,
 one may also add in the space on which the condensation of fields
  takes value.
The vacuum manifold/variety of a quantum field theory
 depends on the coupling constants and the condensation values
 and, hence, on where we are on ${\cal S}_{\Wilson}$.
Moving around in ${\cal S}_{\Wilson}$ may give rise to
 a web/net of vacuum manifolds/varieties of different topologies.
{\it Walls} can form on ${\cal S}_{\Wilson}$ that locally separates
 quantum field theories of the same combinatorial type
 but of different details,
 e.g.\ with different topologies of the vacuum manifold/variety.
This gives a {\it phase structure}\footnote{It
                    should be noted
                   that ${\cal S}_{\Wilson}$ can be
                    non-smooth and with several irreducible components.
                   In such a case, besides the transitions
                     due to crossing the walls
                     inside each irreducible component,
                    there are also transitions due to moving
                     from one irreducible component
                     of ${\cal S}_{\Wilson}$ to another.}
 on ${\cal S}_{\Wilson}$.

Applying this picture to the SQFT on a D-brane probe as in [D-M]
 can give rise to a web/net of birational models of the singular space
 in question by taking the vacuum manifold associated to
 a [theory] $\in {\cal S}_{\Wilson}^{\,\mbox{\tiny D-brane probe}}$.
See, e.g., [G-L-R] for an example of flips-and-flops
 of vacuum manifolds of D-brane probes.

As we are studying D-branes
 along the line of the Polchinski-Grothendieck Ansatz,
the D-brane moduli space arises from the moduli stack
 ${\mathfrak M}^{\mbox{\scriptsize D-brane}}(Y)$
 of morphisms from Azumaya spaces with a fundamental module
 to a string target-space $Y$.
An important guiding/natural question is thus:
 \begin{itemize}
  \item[{\bf Q.}] {\bf [D-brane probe resolution]}$\;$\\
  {\it Can one extract birational models,
       in particular resolutions, of $Y$
       from ${\mathfrak M}^{\mbox{\scriptsize D-brane}}(Y)$?}
 \end{itemize}
In this work, we will give an affirmative explicit answer
 to this question for $Y$ being an ADE orbifold $[{\Bbb C}^2/\Gamma]$.

\bigskip

\noindent
{\it Remark 0.1} [{\it naturality}]$\;$
 It should be noted that
  the production of a web/net of birational models
   of an open-string target space/variety
   from a sub-moduli space of the D-brane probe moduli space
         in the sense of {\rm [L-Y1]} and {\rm [L-L-S-Y]}
   by the netted categorical quotient construction
  is a very general outcome of the Azumaya nature on D-branes.
 In particular, it applies not just
  to the case of ADE orbifold singularities.
 One should think of this setting as
  an extension of the application of the {\it functor of points}
       associated to a given target-space
      to a special noncommutative, namely the Azumaya type,
       source geometry.
 Specifically for D$0$-branes,
  allowing the rank of the fundamental/Chan-Paton module
   on D$0$-brane probes to increase,
  this then generalizes the notion of {\it arcs} and {\it jet schemes}
   of a target-space in commutative algebraic geometry.
 From this last point of view,
  the close tie of D-branes,
    along the line of the Polchiski-Grothendieck Ansatz,
   with birational geometry and resolution of singularities
  is very natural and anticipated.\footnote{String-theorists
                             should {\it not} misunderstand us
                             as saying that
                             what string theorists did based on
                              supersymmetric quantum field theory setting
                              is just trivial.
                             Completely the opposite:
                              what string theorists have taught
                               mathematicians
                               are highly non-trivial!
                             Rather, what we mean to say is that
                              the Polchinski-Grothendieck Ansatz
                              together with mathematical naturality
                              alone are enough to ``foresee"
                              what could/should happen.
                             The fact that the mathematical outcome
                                based solely on this
                               and the string-theoretical outcome
                               can be arranged to agree
                              tells us that
                              this ansatz does characterize
                              a very fundamental nature of D-branes.
                             Indeed,
                              from the Grothendieck's point of view
                              of the contravariant equivalence of
                              the category of
                               functions rings (commutative or not)
                              and the category of local geometries,
                             there is no other way/option things can be
                              than the emergence of
                               an Azumaya-type noncommutative structure
                               on stacked D-branes themselves
                              if D-branes have to behave
                               as open strings would dictate.
                             From Grothendieck's viewpoint,
                              it is the noncommutative structure
                               on a D-brane itself
                              that will in turn enable it to probe
                               the noncommutativity, if any,
                               of the string target space(-time);
                             cf.\ [L-Y1: Sec.~2.2] and
                                  [L-L-S-Y: {\sc Figure}~1-2].}
Cf.\ {\sc Figure}~0-1.
\begin{figure}[htbp]
 \epsfig{figure=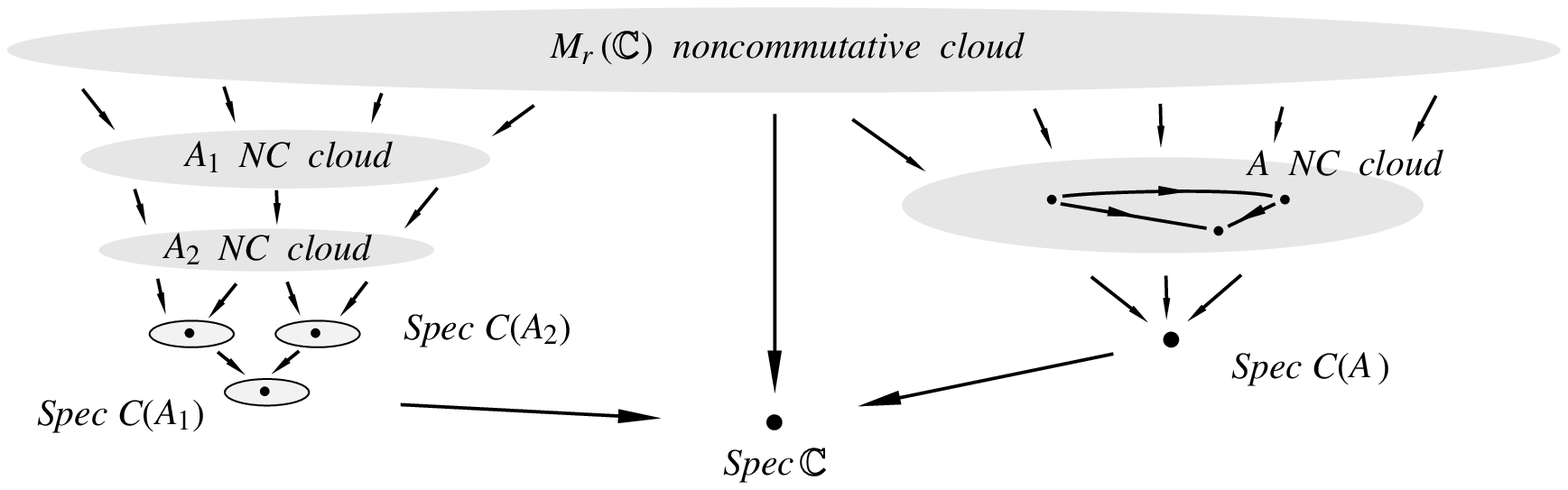,width=16cm}

 \vspace{3em}

 \centerline{\parbox{13cm}{\small\baselineskip 13pt
  {\sc Figure}~0-1.
  An Azumaya scheme contains a very rich amount of geometry,
   revealed via its surrogates;
   cf.\ [L-L-S-Y: {\sc Figure}~1-3].
  Indicated here is the geometry of an Azumaya point
   $\pt^{\Azscriptsize} := (\smallSpec{\Bbb C}, M_r({\Bbb C}))$.
  Here, $A_i$ are ${\Bbb C}$-subalgebras of $M_r({\Bbb C})$
    and $C(A_i)$ is the center of $A_i$ with
   $$
     \begin{array}{cccccccl}
      M_r({\Bbb C}) & \supset  & A_1  & \supset  &  A_2
                    &\supset   & \cdots \\
       \cup  && \cup && \cup \\
     {\Bbb C}\cdot {\mathbf 1} & \subset  & C(A_1)
                    & \subset  & C(A_2)   & \subset  & \cdots & .
     \end{array}
   $$
  According to the Polchinski-Grothendieck Ansatz,
   a D$0$-brane can be modelled prototypically
   by an Azumaya point with a fundamental module of type $r$,
    $(\smallSpec{\Bbb C},\smallEnd({\Bbb C}^r),{\Bbb C}^r)$.
  When the target space $Y$ is commutative,
   the surrogates involved are commutative ${\Bbb C}$-sub-algebras
    of the matrix algebra $M_r({\Bbb C})=\End({\Bbb C}^r)$.
  This part already contains an equal amount of
   information/richness/complexity
   as the moduli space of $0$-dimensional coherent sheaves
   of length $r$.
  When the target space is noncommutative,
   more surrogates to the Azumaya point will be involved.
  Allowing $r$ to go to $\infty$ enables Azumaya points to probe
   ``infinitesimally nearby points" to points on a scheme
   to arbitrary level/order/depth.
  In (commutative) algebraic geometry,
   a resolution of a scheme $Y$ comes from a blow-up.
  In other words,
   a resolution of a singularity $p$ of $Y$ is achieved
   by adding an appropriate family of
   infinitesimally nearby points to $p$.
  Since D-branes with an Azumaya-type structure
   are able to ``see" these infinitesimally nearby points
    via morphisms therefrom to $Y$,
   they can be used to resolve singularities of $Y$.
  Thus, from the viewpoint of Polchinski-Grothendieck Ansatz,
   the Azumaya-type structure on D-branes is
   why D-branes have the power to ``see" a singularity
   of a scheme not just as a point,
   but rather as a partial or complete resolution of it.
  Such effect should be regarded as a generaliztion of
   the standard technique in algebraic geometry
   of probing a singularity of a scheme by arcs
   of the form $\Spec({\Bbb C}[\varepsilon]/(\varepsilon^r))$,
   which leads to the notion of jet-schemes
   in the study of singularity and birational geometry.}}
\end{figure}

\bigskip

\noindent
{\bf Convention.}
 Standard notations, terminology, operations, facts in
  (1) (super)string theory;
  (2) (commutative) algebraic geometry;
  (3) (commutative) stacks;
  (4) descent theory
  can be found respectively in
  (1) [Po2];
  (2) [Ha];
  (3) [L-MB];
  (4) [SGA1] and [Vi].
 \begin{itemize}
  \item[$\cdot$]
   All schemes, algebraic stacks are Noetherian over ${\Bbb C}$.
   All coherent sheaves of modules on an algebriac stack are Cartesian.

  \item[$\cdot$]
   To make the discussion more down to earth,
   the $Y$ in a {\it presentation} $P:Y\rightarrow {\cal Y}$
    of an algebraic stack ${\cal Y}/S$ over a base scheme $S$
    will be chosen by definition to be a scheme$/S$,
    instead of an algebraic space$/S$;
   we will call $Y$ also directly as an {\it atlas} of ${\cal Y}$.
   Similarly, for open charts, $\ldots\,$, etc..

  \item[$\cdot$]
   For linear spaces, e.g.\ ${\Bbb C}^2$, $\End({\Bbb C}^r)$, $\ldots\,$,
    we occasionally identify them with their canonically associated
    affine variety for simplicity of notations.
   Similarly, for their algebraic subsets.
 \end{itemize}

\bigskip

\begin{flushleft}
{\bf Outline.}
\end{flushleft}
{\small
\baselineskip 11pt  
\begin{itemize}
 \item[0.]
  Introduction.
  \vspace{-.6ex}
  \begin{itemize}
   \item[$\cdot$]
    Azumaya-type structure on D-branes.

   \item[$\cdot$]
    D-brane probe resolution of ADE orbifold singularities
    \`{a} la Douglas and Moore.

   \item[$\cdot$]
    Geometric-invariant-theory (GIT) quotient vs.\
    netted categorial quotient.

   \item[$\cdot$]
    D-brane probe and birational geometry.

   \end{itemize}

 \item[1.]
  D-branes on a (commutative) algebraic stack.
  \vspace{-.6ex}
  \begin{itemize}
   \item[$\cdot$]
    Coherent sheaves on algebraic stacks and flatness.

   \item[$\cdot$]
    D-branes on an algebraic stack
    \`{a} la Polchinski-Grothendieck Ansatz.

   \item[$\cdot$]
    The case of an orbifold target.
  \end{itemize}

 \item[2.]
  D-brane probe resolution of ADE orbifold singularities revisited
   \`{a} la Polchinski-Grothendieck Ansatz.
  \vspace{-.6ex}
  \begin{itemize}
   \item[$\cdot$]
    The moduli stack ${\mathfrak M}^{D0}_1([{\Bbb A}^2/\Gamma])$
    of D0-branes on $[{\Bbb A}^2/\Gamma]$.

   \item[$\cdot$]
    A digression: the equivalent Azumaya-'n-morphism setting.

   \item[$\cdot$]
    D$0$-branes on ${\Bbb A}^2$.

   \item[$\cdot$]
    D$0$-branes on $[{\Bbb A}^2/\Gamma]$ and
    an atlas of ${\mathfrak M}^{D0}_1([{\Bbb A}^2/\Gamma])$.

   \item[$\cdot$]
    Resolution of the ADE orbifold singularity of ${\Bbb A}^2/\Gamma$
    via ${\mathfrak M}^{D0}_1([{\Bbb A}^2/\Gamma])$.
  \end{itemize}
\end{itemize}
} 

\newpage
\section{D-branes on a (commutative) algebraic stack.}

A formulation of D-branes on an algebraic stack
 that follows the Polchinski-Grothendieck Ansatz
  and extends [L-Y1] and [L-L-S-Y]
 is given in this section.

\bigskip

\begin{flushleft}
{\bf Coherent sheaves on algebraic stacks and flatness.}
\end{flushleft}
The notion of
 \begin{itemize}
  \item[$\cdot$]
   {\it dimension} of an algebraic stack,

  \item[$\cdot$]
   ({\it Cartesian}) {\it coherent sheaves} on an algebraic stack\\
     and their support, {\it pull-back} and {\it push-forward}
 \end{itemize}
 are defined in [L-MB].
For the notion of the {\it support} $\Supp{\cal F}$ of a coherent
sheaf ${\cal F}$,
 we will use instead
 the one that goes through the ideal sheaf of annihilators
  of the coherent sheaf on an algebraic stack
  that generalizes the notion of
   a scheme-theoretic support of a coherent sheaf on a scheme.
The {\it dimension of a coherent sheaf} ${\cal F}$
 on an algebraic stack ${\cal X}$
 is defined to be the dimension of the support $\Supp{\cal F}$
 of ${\cal F}$ on ${\cal X}$.

\smallskip

\begin{sexplanation-definition}
{\bf [Property P of ${\cal O}$-module on algebraic stack].}\footnote{As
                    this definition is not given explicitly in [L-MB],
                    we particularly like to thank G\'{e}rard Laumon
                    for the very careful and authoritative explanation
                    of this to us.
                   Thanks also to Andrew Tolland for a discussion.}
 {\rm ([Lau].)}
{\rm
 Let P be a property of ${\cal O}$-modules on schemes
  satisfying the following two conditions:
  \begin{itemize}
   \item[(1)] ({\it pull-back})$\,$
   If an ${\cal O}$-module ${\cal F}$ on a scheme $X$
    has the property P,
   then for any smooth morphism of schemes $X^{\prime}\rightarrow X$,
    the pull-back ${\cal F}^{\prime}$ of ${\cal F}$ on $X^{\prime}$
    also has the property P.

   \item[(2)] ({\it descent})$\,$
   An ${\cal O}$-module ${\cal F}$ on a scheme $X$ has the property P
    as soon as
    there exists a smooth and surjective morphism
      of schemes $X^{\prime}\rightarrow X$
     such that
      the pull-back ${\cal F}^{\prime}$ of ${\cal F}$ on $X^{\prime}$
      has the property P.
  \end{itemize}
 Then the {\it property P} makes sense
  {\it for ${\cal O}$-modules on algebraic stacks}$\,$ and
 it is enough to check it on any presentation of the algebraic stack.
}
\end{sexplanation-definition}

\smallskip

\begin{sdefinition}
{\bf [flatness of coherent module on ${\cal X}/{\cal Y}$].}
{\rm
 Given
  a ($1$-)morphism $F:{\cal X}\rightarrow {\cal Y}$
   of algebraic stacks and
  a coherent ${\cal O}_{\cal X}$-module ${\cal M}$ on ${\cal X}$,
 we say that
   ${\cal M}$ is {\it flat over} ${\cal Y}$
  if for every open chart
   $u:U\rightarrow {\cal Y}$
    (in the smooth-\'{e}tale topology/site) of ${\cal Y}$,
  the pull-back $(\widehat{u}\circ v)^{\ast}{\cal M}$
   on every $V$ is flat over $U$,
   where
    $v:V \rightarrow U\times_{u,{\cal Y},F}{\cal X}$
     is an open chart of $U\times_{u,{\cal Y},F}{\cal X}$
    and
    \begin{eqnarray*}
     \xymatrix{
     V\ar[rr]^v\ar[rrd]
      && U\times_{u,{\cal Y},F}{\cal X}
         \ar[rr]^{\widehat{u}}\ar[d]
      && {\cal X}\ar[d]^F \\
     && U\ar[rr]^{u}  && {\cal Y} & \hspace{-4em}.
      }
    \end{eqnarray*}
 (Note that both $U$ and $V$ are schemes by our convention.)
}
\end{sdefinition}

\smallskip

\noindent
This notion plays a fundamental role in the setting of D-branes
 on an algebraic stack studied in the current work.

\bigskip

\begin{flushleft}
{\bf D-branes on an algebraic stack
     \`{a} la Polchinski-Grothendieck Ansatz.}
\end{flushleft}
In [L-Y1: Sec.~1] and [L-L-S-Y: Sec.~2.1] the notion of morphisms
 from an Azumaya scheme with a fundamental module $(X^{Az}, {\cal E})$
 to a projective variety $Y$ is developed from Grothendieck's
 fundamental principle of doing geometries and morphisms
 from local to global via gluing,
together with the natural requirement that
 the composition of morphisms $X_1\rightarrow X_2 \rightarrow X_3$
 should be a morphism $X_1\rightarrow X_3$.
This gives us a prototype definition of a D-brane on $Y$
 along the line of Polchinski-Grothendieck Ansatz.
An algebraic stack and morphisms between them can be constructed
 from local to global with generalized notion of
  coverings and gluing via smooth morphisms.
Thus, one can repeat [L-Y1] and [L-L-S-Y: Sec.~2.1]
 to give a fundamental treatment of
  the notion of
   morphisms from Azumaya schemes with a fundamental module
   to an algebraic stack ${\cal Y}$ and, hence,
  the notion of D-branes on the stack ${\cal Y}$
   following the Polchinski-Grothendieck Ansatz.

In [L-L-S-Y: Sec.~2.2] it is next observed that
 the above fundamental treatment for target a projective variety
 can be recast into an equivalent
 Azumaya-without-Azumaya-'n-morphism-without-morphism setting.
In this latter setting,
 a morphism $\varphi:(X^{\Azscriptsize},{\cal E})\rightarrow Y$,
  where
  $X^{\Azscriptsize}
   =(X,
     {\cal O}^{\Azscriptsize}_X=\Endsheaf_{{\cal O}_X}{\cal E})$
 is completely coded by
 a coherent ${\cal O}_{X\times Y}$-module $\widetilde{\cal E}$
 on $(X\times Y)/X$
 that is flat over $X$ and of relative dimension $0$.
With the notion of flatness of coherent sheaves on an algebraic stack
 over another algebraic stack being set up in the previous theme,
in this work we take the shorter second route
 to define prototypically
 a D-brane on an algebraic stack ${\cal Y}$
 as a morphism
  from an Azumaya scheme with a fundamental module to ${\cal Y}$
 via a direct generalization of
  the Azumaya-without-Azumaya-'n-morphism-without-morphism setting
  of [L-L-S-Y: Sec.~2.2],
 as follows:

\smallskip

\begin{sdefinition}
{\bf [D-brane on algebraic stack \`{a} la Polchinski-Grothendieck Ansatz].}
{\rm
 A {\it D-brane} with the underlying domain (commutative) scheme $X$
  on a target algebraic stack ${\cal Y}$
  {\it \`{a} la Polchinski-Grothendieck Ansatz}
 is defined to be a coherent ${\cal O}_{X\times{\cal Y}}$-module
 $\widetilde{\cal E}$ on $X\times{\cal Y}$
 that
  \begin{itemize}
   \item[(i)]
    is flat over $X$,

   \item[(ii)]
    is of relative dimension $0$ with respect to $X$, and

   \item[(iii)]
    ${\cal E}:=\pr_{1\ast}\widetilde{\cal E}$
     is a locally-free coherent ${\cal O}_X$-module,
     where $\pr_1:X\times {\cal Y}\rightarrow X$
      is the projection map to $X$.
  \end{itemize}
 Two D-branes, represented respectively by
    $\widetilde{\cal E}_1$ on $X_1\times {\cal Y}$ and
    $\widetilde{\cal E}_2$ on $X_2\times {\cal Y}$
    under the above setting,
   are said to be {\it isomorphic}
  if there exit
   a (${\Bbb C}$-)scheme-isomorphism
     $h:X_1\stackrel{\sim}{\rightarrow} X_2$ and
   an ${\cal O}_{X_1\times{\cal Y}}$-module-isomorphism
      $\widetilde{h}:
       h^{\ast}\widetilde{\cal E}_2
         \stackrel{\sim}{\rightarrow} \widetilde{\cal E}_1$.
}\end{sdefinition}

\smallskip

\noindent
Note that for general ${\cal Y}$ in Definition~1.3,
 Condition (i) and Condition (ii) together do not imply
 Condition (iii).
The latter thus has to be imposed additionally.

\smallskip

\begin{sremark}
{\rm [{\it recovering morphism from Azumaya scheme
           with fundamental module to ${\cal Y}$
           from $\widetilde{\cal E}$}].}
{\rm (Cf.\ [L-L-S-Y: Sec.~2.2 and Figure~2-2-1].)} {\rm
 (1)
 The $\widetilde{\cal E}$ in the above definition reproduces
  \begin{itemize}
   \item[$\cdot$]
    an underlying Azumaya scheme with a fundamental module
     $$
      (X^{A\!z},{\cal E})\;
       :=\;
        (X,\,
         {\cal O}_X^{A\!z}:=\Endsheaf_{{\cal O}_X}{\cal E},\,
         {\cal E})\,,
     $$
     where ${\cal E}= \pr_{1\ast}\widetilde{\cal E}$
     in the Definition,

   \item[$\cdot$]
    a morphism
    $$
     \varphi\; :\;
      (X^{A\!z},{\cal E})\;  \longrightarrow\; {\cal Y}
    $$
    via the restriction of the projection map
     $\pr_2:X\times {\cal Y}\rightarrow {\cal Y}$
     to $\Supp\widetilde{\cal E}$.
  \end{itemize}
 The {\it surrogate} $X_{\varphi}$ of $X^{A\!z}$
  associated to $\varphi$ is given by
  $\boldSpec(
     \pr_{1\ast}({\cal O}_{\scriptsizeSupp\widetilde{\cal E}}) )$.
 This is a scheme that is finite over $X$.
 In terms of $X_{\varphi}$, $\varphi$ is represented by
  a scheme-morphism
  $\widehat{f}_{\varphi}: \widehat{X}_{\varphi}\rightarrow Y$,
   where
    $P:Y\rightarrow {\cal Y}$ is an atlas of ${\cal Y}$ and
    $\widehat{X}_{\varphi}$ is a scheme with a built-in
     smooth surjective morphism
     $\widehat{X}_{\varphi}\rightarrow X_{\varphi}$,
     depending on both $\widetilde{\cal E}$ and the choice of $P$.

 (2)
 The {\it image D-brane with a Chan-Paton module}
  on the stack ${\cal Y}$ is defined/given by the push-forward
  $\varphi_{\ast}{\cal E} := \pr_{2\ast}\widetilde{\cal E}$.
 The support $\Supp(\varphi_{\ast}{\cal E})$ on ${\cal Y}$
  is by definition the underlying ``brane" on ${\cal Y}$. }
\end{sremark}

\smallskip

\begin{slemma}
{\bf [pull-back to atlas].}
{\it
 Continuing Definition~1.3,
 a coherent ${\cal O}_{X\times{\cal Y}}$-module $\widetilde{\cal E}$
  on $X\times{\cal Y}$ is flat over $X$
 if and only if
  there exists an atlas $P:Y\rightarrow {\cal Y}$
   such that
    $(\Id_X\times P)^{\ast}\widetilde{\cal E}$ on $X\times Y$
    is flat over $X$.
 The latter holds
  if and only if
   $(\Id_X\times P)^{\ast}\widetilde{\cal E}$ on $X\times Y$
    is flat over $X$
   for every atlas $P:Y\rightarrow {\cal Y}$.}
\end{slemma}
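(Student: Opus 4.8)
\noindent{\it Proof (plan).}
The plan is to deduce all three equivalences at once from the general descent principle of Explanation/Definition~1.1, applied not to ${\cal Y}$ but to the algebraic stack ${\cal X} := X\times{\cal Y}$ together with its projection $\pr_1:{\cal X}\rightarrow X$ to the scheme $X$. First I would unwind what ``flat over $X$'' means here in the sense of Definition~1.2 (with the target stack there taken to be the scheme $X$ and $F=\pr_1$): since $X$ is a scheme one may use the tautological chart $\Id:X\rightarrow X$, and then the condition becomes that $v^{\ast}\widetilde{\cal E}$ is flat over $X$ for \emph{every} open chart $v:V\rightarrow X\times{\cal Y}$, where $V$ is regarded as an $X$-scheme via $V\rightarrow X\times{\cal Y}\rightarrow X$ (that this captures Definition~1.2 for all charts of $X$ uses only the \'etale-locality of flatness over a scheme). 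I would then observe that for any atlas $P:Y\rightarrow{\cal Y}$ the base change $\Id_X\times P:X\times Y\rightarrow X\times{\cal Y}$ is smooth and surjective with $X\times Y$ a scheme, hence is itself a presentation of ${\cal X}$; so the two clauses of the lemma assert exactly that flatness over $X$ holds on \emph{some}, resp.\ \emph{every}, product-type presentation of ${\cal X}$.

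Next I would apply Explanation/Definition~1.1 taking as the property ``P'' the property, for an ${\cal O}_Z$-module ${\cal G}$ on a scheme $Z$ equipped with a structure morphism $Z\rightarrow X$, of being flat over $X$; this is well defined in the relative setting because any chart $V\rightarrow X\times{\cal Y}$ acquires a canonical map to $X$ and smooth morphisms of charts respect it. I then verify the two hypotheses of~1.1. For condition~(1) (stability under smooth pull-back): if $Z^{\prime}\rightarrow Z$ is smooth, hence flat, then on stalks the pull-back of an $X$-flat ${\cal G}$ has the form $M\otimes_B C$ with $M$ flat over the local ring $A$ of $X$ at the image point, $B$ the local ring of $Z$, and $B\rightarrow C$ the flat local homomorphism coming from $Z^{\prime}\rightarrow Z$; the two-step exactness argument ($-\otimes_A M$ exact, then $-\otimes_B C$ exact) shows $M\otimes_B C$ is $A$-flat, so the pull-back is $X$-flat. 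For condition~(2) (descent along smooth surjective $Z^{\prime}\rightarrow Z$): such a morphism induces flat \emph{local} homomorphisms on local rings, which are automatically faithfully flat, and faithfully flat descent of flatness over $A$ (exactness of $-\otimes_A(M\otimes_B C)$ reflects to exactness of $-\otimes_A M$) gives that $X$-flatness descends. Thus ``flat over $X$'' meets the hypotheses of~1.1, the commutative-algebra content here being entirely standard and citable.

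Finally I would invoke the conclusion of Explanation/Definition~1.1: the property ``flat over $X$'' makes sense for coherent ${\cal O}_{\cal X}$-modules on ${\cal X}=X\times{\cal Y}$, may be tested on any single presentation of ${\cal X}$, and then holds on every presentation. Combined with the first step this yields $\widetilde{\cal E}$ flat over $X$ (Definition~1.2) $\Leftrightarrow$ $(\Id_X\times P)^{\ast}\widetilde{\cal E}$ flat over $X$ for one atlas $P:Y\rightarrow{\cal Y}$ (and atlases exist) $\Leftrightarrow$ the same for every atlas $P:Y\rightarrow{\cal Y}$, which is the lemma. I expect the only point requiring genuine care to be the bookkeeping of the first step --- in particular noting that a general presentation of $X\times{\cal Y}$ need not be of product form, so that~1.1 is precisely what lets one pass between a product-type presentation and the definitional ``all charts'' condition --- rather than the descent arguments themselves.
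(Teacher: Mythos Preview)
Your proposal is correct and follows essentially the same approach as the paper: both invoke Explanation/Definition~1.1 together with the standard commutative-algebra facts that flatness over $X$ is preserved by smooth (hence flat) pull-back and descends along smooth surjective (hence faithfully flat) morphisms. The paper compresses this into a one-line citation of [Ha: Proposition~9.2(d)], descent under smooth morphisms (cf.\ [SGA1], [Vi]), and Explanation/Definition~1.1, whereas you have spelled out the verification of conditions~(1) and~(2) and the bookkeeping about product-type versus general presentations; but the substance is the same.
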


\begin{proof}
 This follows from
  the definition of flat modules on schemes,
  [Ha: Proposition 9.2(d)], and
  the descent of morphisms of coherent sheaves
    under a smooth morphism (cf.\ [SGA1] and [Vi]), and
  Explanation/Definition~1.1.

\end{proof}

\smallskip

\begin{flushleft}
{\bf The case of an orbifold target.}
\end{flushleft}
An orbifold ${\cal Y}$ is a special Deligne-Mumford stack
 for which a local chart is of the form $(U,\Gamma_U)$  and
 ${\cal Y}$ is locally modelled on Deligne-Mumford stacks
  of the form $[U/\Gamma_U]$, the quotient stack of $U$ by
  $\Gamma$,
 where $\Gamma_U$ is a finite group that acts on the scheme $U$.
The topological language of Thurston [Th: Chap.~13] on orbifolds
 is directly adaptable to the algebro-geometric language
 of Deligne-Mumford stacks.
In particular:

\smallskip

\begin{sdefinition-lemma}
{\bf [orbifold structure group at point].} {\rm
 Let
  $p\in {\cal Y}$ be a geometric point of the orbifold ${\cal Y}$ and
  $(U,\Gamma_U)$ be an orbifold chart of ${\cal Y}$
   that contains\footnote{For readers unfamiliar with stacks:
         In terms of Deligne-Mumford stack language,
         this means that $\{p\}\times_{\cal Y}U$ is non-empty,
         where the fibered product is taken with respect to
         the built-in morphisms
          $\{p\}\rightarrow {\cal Y}$ and $U\rightarrow {\cal Y}$.}
  $p$.
 Define the {\it orbifold structure group} $\Gamma_p$
  of ${\cal Y}$ {\it at} $p$ to be the stabilizer $\Stab(p)$
  of the $\Gamma$-action on $U$.
 This is well-defined up to group-isomorphisms
  as, up to group-isomorphisms, $\Gamma_p$ is independent of
   the choice of $(U,\Gamma_U)$ that contains $p$.
}\end{sdefinition-lemma}

\smallskip

\begin{sdefinition-lemma}
{\bf [orbifold-length].} {\rm
 Given a coherent $0$-dimensional ${\cal O}_{\cal Y}$-module
   on an orbifold ${\cal Y}$,
 let $|\Supp{\cal F}|=\{p_1,\,\cdots\,,p_k\}$
  be the set of geometric points in the support $\Supp{\cal F}$
  of ${\cal F}$ and
  $l_i$ be the length of ${\cal F}$ at $p_i$.
 Define the {\it orbifold-length} $\orbil({\cal F})$
  of ${\cal F}$ on ${\cal Y}$ to be
  $$
   \orbil({\cal F})\;=\;
    \sum_{i=1}^k\, \frac{l_i}{|\Gamma_i|}\,.
  $$
 This is invariant under flat deformations of ${\cal F}$.
}\end{sdefinition-lemma}

\smallskip

\begin{sremark} {\rm
[{\it fractional orbifold-length}].
 For an orbifold ${\cal Y}$
  with the orbifold structure group $\Gamma_{\bullet}$ being trivial
  on an open dense sub-orbifold of ${\cal Y}$,
 a ${\cal O}_{\cal Y}$-module ${\cal F}$
   with non-integer $\orbil({\cal F})$
  contains a submodule ${\cal F}^{\prime}$ that is supported on
  some orbifold-point(s) with nontrivial structure group
  in such away that ${\cal F}^{\prime}$ cannot be deformed away
  from this(/these) point(s).
 In other words, ${\cal F}$ has an unmovable/trapped
  direct summand supported at point(s) with nontrivial
  orbifold structure group.
}\end{sremark}

\smallskip

\begin{sremark} {\rm
[{\it orbifold Euler characteristic}].
 The notion of orbifold-length is a special case of
 the notion of {\it orbifold Euler characteristic}
 for a coherent ${\cal O}_{\cal Y}$-module on an orbifold ${\cal Y}$.
}\end{sremark}

\bigskip

\section{D-brane probe resolution of ADE orbifold singularities
         revisited \`{a} la Polchinski-Grothendieck Ansatz.}

We now address the main theme of the current work:
extracting a resolution
 for the variety ${\Bbb A}^2/\Gamma$
 from a D-brane probe moduli space
 in the sense of Polchinski-Grothendieck Ansatz.

\bigskip

\begin{flushleft}
{\bf The moduli stack ${\mathfrak M}^{D0}_1([{\Bbb A}^2/\Gamma])$
      of D0-branes on $[{\Bbb A}^2/\Gamma]$.}
\end{flushleft}
Let
 $\Gamma$ be a finite subgroup of $\SU(2)$ acting on
  ${\Bbb A}^2 =\Spec{\Bbb C}[z_1,z_2]$
  via the (anti-)action\footnote{An
        {\it anti-action} of $\Gamma$ is by definition an action
        of $\Gamma^{\circ}$,
        where $\Gamma^{\circ}$ is the group
        that has the same elements as $\Gamma$
        but with $\gamma_1\gamma_2$ in $\Gamma^{\circ}$ defined
        to be $\gamma_2\gamma_1$ in $\Gamma$.
       In this work, there is no chance of ambiguity of
        whether $\Gamma$ acts or anti-acts
        as all the actions or anti-actions involved are induced
        from the $\Gamma$-action on ${\Bbb A}^2$ or its lifting.
       We thus call either directly as an action.}
  $$
   (z_1,z_2) \;
   \stackrel{\scriptsize
             \mbox{\raisebox{3ex}{
     $\gamma=\left(\begin{array}{cc} a&c\\ b&d \end{array} \right)$
              }}} {\longlongmapsto}\;
   (z_1,z_2)
   \left(\begin{array}{cc} a & c \\b & d\end{array} \right)^t \;
   =:\;  (\gamma\odot z_1\,,\, \gamma\odot z_2)
  $$
  on generators of the function ring,
 $r=|\Gamma|$ be the order of $\Gamma$, and
 $[{\Bbb A}^2/\Gamma]$
   be the orbifold associated to the group action.
Note that $(\gamma_2\gamma_1)\odot z_i= \gamma_1\odot (\gamma_2\odot z_i)$,
 $i=1\,, 2$.
By construction,
$[{\Bbb A}^2/\Gamma]$ is a Deligne-Mumford stack
 with an atlas $P:{\Bbb A}^2\rightarrow [{\Bbb A}^2/\Gamma]$.
The fibered product
  \begin{eqnarray*}
  \xymatrix{
   {\Bbb A}^2\times_{P,\,[{\Bbb A}^2/\Gamma],\,P}{\Bbb A}^2
    \ar[d]_{pr_1}\ar[rrr]^{pr_2}
    &&& {\Bbb A}^2\ar[d] \\
   {\Bbb A}^2\ar[rrr]  &&& [{\Bbb A}^2/\Gamma]
  }
  \end{eqnarray*}
  from the Isom-functor construction
 defines a morphism ${\Bbb A}^2\times\Gamma \rightarrow {\Bbb A}^2$
 that recovers the $\Gamma$-action on ${\Bbb A}^2$.
{From} our setting
 Definition~1.3 along the Polchinski-Grothendieck Ansatz,
 a D$0$-brane on $[{\Bbb A}^2/\Gamma]$ of stacky type $1$ is,
 by definition, given
 by a $0$-dimensional coherent sheaf on $[{\Bbb A}^2/\Gamma]$
  of orbifold-length $1$.
It follows from
 Lemma~1.5 and the above discussion that
this coherent ${\cal O}_{[{\Bbb A}^2/\Gamma]}$-module
  on $[{\Bbb A}^2/\Gamma]$
 is identical, via pull-back versus descent, to
  a $0$-dimensional coherent $\Gamma$-${\cal O}_{{\Bbb A}^2}$-module
  $\widetilde{\cal E}$ of length $r$ on ${\Bbb A}^2$.
It follows that:

\smallskip

\begin{slemma}
{\bf [moduli stack of D0-branes on $[{\Bbb A}^2/\Gamma]$].}
{\it
 The moduli stack ${\mathfrak M}^{D0}_1([{\Bbb A}^2/\Gamma])$
   of D$0$-branes of stacky type $1$
    on the orbifold $[{\Bbb A}^2/\Gamma]$
  is an Artin stack,
   given by the stack of
    coherent $\Gamma$-${\cal O}_{{\Bbb A}^2}$-modules
    of length $r$ on ${\Bbb A}^2$,
  where $r=|\Gamma|$.
}
\end{slemma}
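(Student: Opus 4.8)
The plan is to reduce the statement, via descent along the atlas $P:{\Bbb A}^2\to[{\Bbb A}^2/\Gamma]$, to the algebraicity of the stack of $\Gamma$-equivariant $0$-dimensional coherent sheaves of length $r$ on ${\Bbb A}^2$, and then to present that stack explicitly as a quotient stack. First I would unwind Definition~1.3 in the case of domain $X=\Spec{\Bbb C}$ and target ${\cal Y}=[{\Bbb A}^2/\Gamma]$: a D$0$-brane is then a coherent ${\cal O}_{[{\Bbb A}^2/\Gamma]}$-module $\widetilde{\cal E}$ for which condition~(i) (flatness over $\Spec{\Bbb C}$) is vacuous, condition~(ii) says exactly that $\widetilde{\cal E}$ is $0$-dimensional, and condition~(iii) is then automatic, since over $\Spec{\Bbb C}$ every coherent module is locally free and $\pr_{1\ast}\widetilde{\cal E}$, the global sections of a finite-length sheaf, is finite-dimensional; ``stacky type $1$'' adds the numerical constraint $\orbil(\widetilde{\cal E})=1$. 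A family over a test scheme $T$ is, by the same unwinding, a coherent sheaf on $T\times[{\Bbb A}^2/\Gamma]$ that is flat over $T$, of relative dimension $0$, has $\pr_{1\ast}$ locally free of finite rank, and has fibrewise orbifold-length $1$, with morphisms of families as in Definition~1.3.

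Next I would invoke descent for coherent sheaves along the atlas $P$, which is a finite \'{e}tale morphism of degree $r$ --- indeed a $\Gamma$-torsor, as recovered just above the lemma. This yields an equivalence, compatible with arbitrary base change on $T$, between coherent sheaves on $T\times[{\Bbb A}^2/\Gamma]$ and $\Gamma$-equivariant coherent ${\cal O}_{T\times{\Bbb A}^2}$-modules. By Lemma~1.5 this equivalence matches ``flat over $T$'' on the two sides; it trivially matches ``relative dimension $0$'' and ``$\pr_{1\ast}$ locally free of finite rank''; and, since $P$ has degree $r$, one has the fibrewise identity $\length\bigl((\Id_T\times P)^{\ast}\widetilde{\cal E}|_t\bigr)=r\cdot\orbil(\widetilde{\cal E}|_t)$, which I would verify by stratifying the support of $\widetilde{\cal E}|_t$ into free $\Gamma$-orbits and the fixed point of $\Gamma$ and computing the two contributions separately. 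Hence ``$\orbil=1$'' downstairs corresponds precisely to ``$\Gamma$-equivariant of fibrewise length $r$'' upstairs, so ${\mathfrak M}^{D0}_1([{\Bbb A}^2/\Gamma])$ is equivalent, as a category fibred in groupoids over $(\mathrm{Sch}/{\Bbb C})$, to the stack of $\Gamma$-equivariant $0$-dimensional coherent ${\cal O}_{{\Bbb A}^2}$-modules of length $r$.

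It then remains to see that this last stack is Artin. Since ${\Bbb A}^2=\Spec{\Bbb C}[z_1,z_2]$ is affine, a family over $T$ of $0$-dimensional coherent sheaves of length $r$ on it is the same as a locally free ${\cal O}_T$-module of rank $r$ equipped with two commuting ${\cal O}_T$-linear endomorphisms $B_1,B_2$ (the actions of $z_1,z_2$); choosing a frame thus presents the non-equivariant version of the stack as $[{\cal Z}/\GL_r]$, with ${\cal Z}\subset\End({\Bbb C}^r)\times\End({\Bbb C}^r)$ the affine finite-type scheme of commuting pairs. Imposing the $\Gamma$-equivariant structure amounts to adjoining an $r$-dimensional representation $\rho$ of $\Gamma$ on the frame together with linear equations forcing $B_1,B_2$ to intertwine $\rho$ with the $\Gamma$-action on ${\Bbb A}^2$; as there are only finitely many isomorphism classes of such $\rho$ and each is rigid, the equivariant stack is a finite disjoint union over $[\rho]$ of quotient stacks $[{\cal Z}_{\rho}/C_{\rho}]$ of affine finite-type schemes by the centralizers $C_{\rho}\subset\GL_r$ of $\rho(\Gamma)$, hence an Artin stack. (Alternatively one may simply invoke the algebraicity of the stack of coherent sheaves of bounded length on the Deligne-Mumford stack $[{\Bbb A}^2/\Gamma]$.)

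I expect the genuinely delicate point to be the orbifold-length bookkeeping --- confirming that the normalization $\orbil=1$ translates to exactly ``length $r$'' uniformly in families and across all strata of the support --- together with the routine but slightly fussy verification that the descent equivalence respects flatness and base change, which is precisely what Lemma~1.5 is designed to supply; once the quotient-stack presentation is written down, the algebraicity itself is standard.
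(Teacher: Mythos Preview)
Your proposal is correct and follows essentially the same route as the paper: the paragraph immediately preceding the lemma already reduces the statement, via Definition~1.3 and Lemma~1.5 (pull-back/descent along the atlas $P$), to the identification of orbifold-length-$1$ sheaves on $[{\Bbb A}^2/\Gamma]$ with $\Gamma$-equivariant length-$r$ sheaves on ${\Bbb A}^2$, and then defers the Artin-stack assertion to [L-L-S-Y: Sec.~3.1]. Your explicit quotient-stack presentation is a spelled-out version of the atlas $C_2\Gamma M_r({\Bbb C})$ that the paper constructs a few paragraphs later, so there is no genuine divergence of method.
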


\smallskip

\noindent
Cf.\ [L-L-S-Y: Sec.~3.1].

\bigskip

\begin{flushleft}
{\bf A digression:
     the equivalent Azumaya-'n-morphism setting.}\footnote{String-theorists
                          are highly recommended to compare
                          the several very concrete/explicit
                          geometric pictures of D-branes
                           - all following from
                             the Polchinski-Grothendieck Ansatz -
                          in this theme with
                          whatever stringy geometric picture(s)
                          you may have had for Douglas-Moore [D-M].}
\end{flushleft}
While in this work
 the Azumaya-without-Azumaya-'n-morphism-without-morphism setting
  of D-branes along the Polchinski-Grothendieck Ansatz
 is adopted to circumvent the otherwise necessary
  but tedious - albeit more fundamental - presentation for Sec.~1,
it is instructive to see what morphism a geometric point of
 ${\mathfrak M}^{D0}_1([{\Bbb A}^2/\Gamma])$ really corresponds to.
This recovers then the hidden equivalent Azumaya-'n-morphism setting
 of D-branes on the orbifold $[{\Bbb A}^2/\Gamma]$.

Recall Remark~1.4 that generalizes [L-L-S-Y: Sec.~2.2].
Let
 \begin{eqnarray*}
  \xymatrix{
   [{\Bbb A}^2/\Gamma]^{\bullet}\;
   :=\; \Spec{\Bbb C}\times [{\Bbb A}^2/\Gamma]\;
       (=\; [{\Bbb A}^2/\Gamma])
     \ar[d]^{pr_1}\ar[rrr]^{\hspace{8em}pr_2}
    &&& [{\Bbb A}^2/\Gamma] \\
   \Spec{\Bbb C}
   }
 \end{eqnarray*}
 be the projection maps.
Here we use $[{\Bbb A}^2/\Gamma]^{\bullet}$ to distinguish
 the product (over $\Spec{\Bbb C}$) of
  the domain scheme $\Spec{\Bbb C}$ and
  the target orbifold $[{\Bbb A}^2/\Gamma]$
 from the target orbifold $[{\Bbb A}^2/\Gamma]$
 for conceptual clarity.
Then,
 for an ${\cal O}_{[{\Bbb A}^2/\Gamma]^{\bullet}}$-module
  $\widetilde{\cal E}$ of orbifold-length $1$,
 $\pr_{1\ast}\widetilde{\cal E}\simeq {\Bbb C}^r$,
  where recall that $r=|\Gamma|$.
Thus, a geometric point of ${\mathfrak M}^{D0}_1([{\Bbb A}^2/\Gamma])$
 corresponds to a morphism
 $$
  \varphi\;:\;
   (\pt^{\Azscriptsize},{\Bbb C}^r)\;
   :=\; (\Spec{\Bbb C}, \End({\Bbb C}^r), {\Bbb C}^r)\;
   \longrightarrow\; [{\Bbb A}^2/\Gamma]
 $$
 from the Azumaya ${\Bbb C}$-point of type $r$
  with a fundamental module to $[{\Bbb A}^2/\Gamma]$.
Furthermore, as $\orbil(\widetilde{\cal E})=1$,
 $\Supp\widetilde{\cal E}$ contains only one geometric point $p$
 of $[{\Bbb A}^2/\Gamma]$.
There are only two situations:
 either $\Gamma_p$ is trivial or $\Gamma_p=\Gamma$.
For convenience,
 denote by ${\mathbf 0}$ the geometric point in ${\Bbb A}^2$
  associated to the ideal $(z_1, z_2)$
  for ${\Bbb A}^2=\Spec({\Bbb C}[z_1, z_2])$  and
 let $U = {\Bbb A}^2-\{{\mathbf 0}\}$.
The geometric point in $[{\Bbb A}^2/\Gamma]$ associated to
 ${\mathbf 0}\in {\Bbb A}^2$ will also be denoted by ${\mathbf 0}$.

\medskip

\begin{flushleft}
{{\it Case} $(a)\,$: $\Gamma_p$ is trivial.}
\end{flushleft}
This happens when $p$ lies in the open dense sub-orbifold
 $[U/\Gamma]$ of $[{\Bbb A}^2/\Gamma]$.
In this case$\,$:
 \begin{itemize}
  \item[$\cdot$]
   The length $l_p$ of $\widetilde{\cal E}$ at $p$ is equal to $1$.

  \item[$\cdot$]
   $\Supp\widetilde{\cal E}$ is a $0$-dimension closed sub-orbifold
    $[\{p_1,\,\cdots\,, p_r \}/\Gamma]$ of $[{\Bbb A}^2/\Gamma]$,
    where $\Gamma$ acts on the ($0$-dimensional reduced) scheme
    $\{p_1,\,\cdots\,, p_r \}$ effectively and transitively.

  \item[$\cdot$]
   The surrogate $\pt_{\varphi}$ of $\pt^{\Azscriptsize}$
    associated to $\varphi$ is given by $\{p_1,\,\cdots\,, p_r\}$,
   which is $\Gamma$-isomorphic to
    the disjoint union
    $\Spec(\prod_r{\Bbb C})\;=\;\amalg_r\Spec{\Bbb C}$
    of $r$-many ${\Bbb C}$-points,
    equipped with an effective and transitive $\Gamma$-action
    modelled on the left multiplication of $\Gamma$ on itself.
   Here $\prod_r{\Bbb C}$ is the product ring of ${\Bbb C}$'s and
    is canonically embedded in $\End({\Bbb C}^r)$
    as a ${\Bbb C}$-sub-algebra.
 \end{itemize}
The morphism $\varphi$ is thus represented by
 an embedding of $\Gamma$-schemes:
 $$
  \widehat{f}_{\varphi}\;:\;
   \pt_{\varphi}\;=\; \{p_1,\,\cdots\,, p_r \}\;
    \longrightarrow\; {\Bbb A}^2\,,
 $$
 where ${\Bbb A}^2$ is the built-in atlas of the orbifold
  $[{\Bbb A}^2/\Gamma]$ regarded as a Deligne-Mumford stack.

The fundamental module ${\Bbb C}^r=\pr_{1\ast}\widetilde{\cal E}$
  on $\pt^{\Azscriptsize}$
 is naturally a $\Gamma$-${\cal O}_{pt_{\varphi}}$-module.
In terms of the latter, it is isomorphic to
 ${\cal O}_{pt_{\varphi}} = \oplus_{i=1}^r{\cal O}_{p_i}$.
Note that in the current case,
 $\Gamma$ acts on the fundamental module ${\Bbb C}^r$
 via the regular representation.
The image D$0$-brane with Chan-Paton module on $[{\Bbb A}^2/\Gamma]$
 that corresponds to $\varphi$ is given by
 $\varphi_{\ast}\widetilde{\cal E}
  := \pr_{2\ast}\widetilde{\cal E}$,
 which is $\widetilde{\cal E}$ itself
  after identifying $[{\Bbb A}^2/\Gamma]^{\bullet}$
  with $[{\Bbb A}^2/\Gamma]$ canonically.
It is represented by the $\Gamma$-${\cal O}_{{\Bbb A}^2}$-module
 $\widehat{f}_{\varphi\ast}{\cal O}_{pt_{\varphi}}
  = \oplus_{i=1}^k\,{\cal O}_{\widehat{f}_{\varphi}(p_i)}$
 on the atlas ${\Bbb A}^2$ of $[{\Bbb A}^2/\Gamma]$.

\medskip

\begin{flushleft}
{{\it Case} $(b)\,$: $\Gamma_p=\Gamma$.}
\end{flushleft}
This happens when $p={\mathbf 0}$.
In this case,
 \begin{itemize}
  \item[$\cdot$]
   The length $l_p$ of $\widetilde{\cal E}$ at $p$
    is equal to $r$ ($\,=|\Gamma|\,$).

  \item[$\cdot$]
   $\Supp\widetilde{\cal E}$ is a $0$-dimension closed sub-orbifold
    $[Z/\Gamma]$ of $[{\Bbb A}^2/\Gamma]$.
   Here $\Gamma$ acts on the $0$-dimensional connected scheme $Z$
   and $Z=\Spec A$
    for some local Artin $\Gamma$-${\Bbb C}$-algebra $A$
    of length $\le r$.

  \item[$\cdot$]
   The surrogate $\pt_{\varphi}$ of $\pt^{\Azscriptsize}$
    associated to $\varphi$ is given by the $\Gamma$-scheme $Z$.
 \end{itemize}
The morphism $\varphi$ is thus represented by an embedding
 of $\Gamma$-schemes:
 $$
  \widehat{f}_{\varphi}\;:\;
   \pt_{\varphi}\;=\; Z\;
    \longrightarrow\; {\Bbb A}^2\,,
 $$
 where ${\Bbb A}^2$ is the built-in atlas of $[{\Bbb A}^2/\Gamma]$.

Again, the fundamental module
 ${\Bbb C}^r=\pr_{1\ast}\widetilde{\cal E}$ on $\pt^{\Azscriptsize}$
 is naturally a $\Gamma$-${\cal O}_{pt_{\varphi}}$-module,
 denoted by $\widehat{\cal E}$.
However, it can happen that $\widehat{\cal E}$ is not isomorphic to
 ${\cal O}_{pt_{\varphi}}$.
Note also that $\Gamma$ now acts on the fundamental module ${\Bbb C}^r$,
 but possibly as a direct sum of irreducible representations,
 cf.~{\sc Figure}~2-1: $\varphi_4$.
The image D$0$-brane with Chan-Paton module on $[{\Bbb A}^2/\Gamma]$
 that corresponds to $\varphi$ is given by
 $\varphi_{\ast}\widetilde{\cal E}
  := \pr_{2\ast}\widetilde{\cal E}$,
 which is $\widetilde{\cal E}$ itself
  after identifying $[{\Bbb A}^2/\Gamma]^{\bullet}$
  with $[{\Bbb A}^2/\Gamma]$ canonically.
It is represented by the $\Gamma$-${\cal O}_{{\Bbb A}^2}$-module
 $\widehat{f}_{\varphi\ast}\widehat{{\cal E}}$
 on the atlas ${\Bbb A}^2$ of $[{\Bbb A}^2/\Gamma]$.
Its support $\Supp(\widehat{f}_{\varphi\ast}\widehat{\cal E})$
 on ${\Bbb A}^2$
 is a representation
 of the image D$0$-brane $\Supp(\varphi_{\ast}\widetilde{\cal E})$
  (a sub-orbifold) on $[{\Bbb A}^2/\Gamma]$.

\medskip
\begin{flushleft}
{Cf.\ {\sc Figure} 2-1.}
\end{flushleft}
\begin{figure}[htbp]
 \epsfig{figure=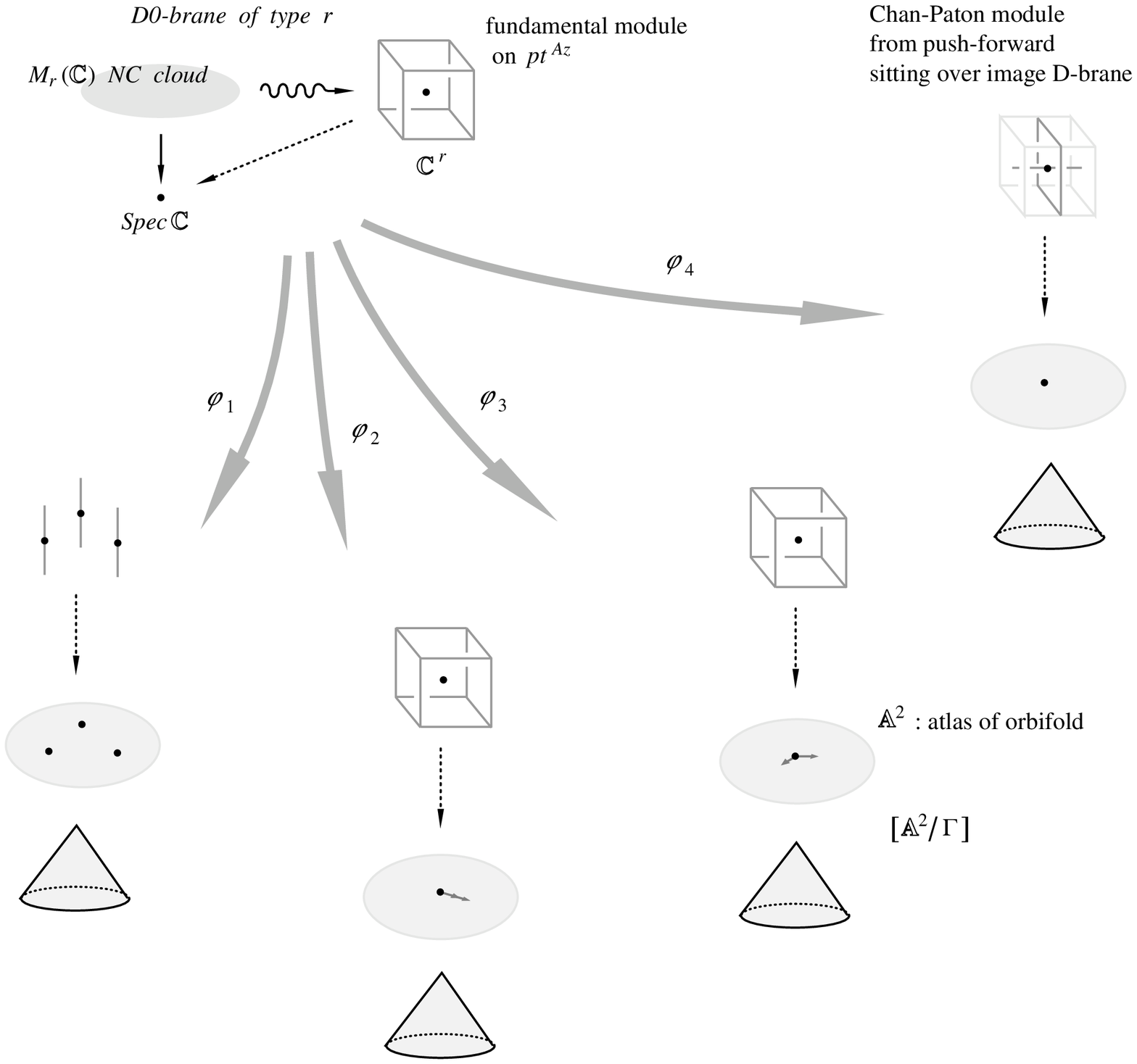,width=16cm}
 \centerline{\parbox{13cm}{\small\baselineskip 13pt
  {\sc Figure} 2-1.
  Examples of morphisms from an Azumaya point with a fundamental module
   $(\smallSpec{\Bbb C}, \smallEnd({\Bbb C}^r),{\Bbb C}^r)$,
    which models an intrinsic D$0$-brane
    according to the Polchinski-Grothendieck Ansatz,
   to the orbifold $[{\Bbb A}^2/\Gamma]$ are shown.
  Morphism $\varphi_1$ is in Case (a)
   while morphisms $\varphi_2$, $\varphi_3$, $\varphi_4$
    are in Case (b).
  The image D$0$-brane under $\varphi_i$ on the orbifold $[{\Bbb A}^2/\Gamma]$
    is represented by a $0$-dimensional $\Gamma$-subscheme of length $\le r$
    on the atlas ${\Bbb A}^2$ of $[{\Bbb A}^2/\Gamma]$.
  }}
\end{figure}

\vspace{16em}


\begin{flushleft}
{\bf D0-branes on ${\Bbb A}^2$.}
\end{flushleft}
In this theme we recast [L-Y1: Sec.~4.3] as a preparation
 to understanding ${\mathfrak M}^{D0}_1([{\Bbb A}^2/\Gamma])$ further.

A $0$-dimensional coherent ${\cal O}_{{\Bbb A}^2}$-module ${\cal
F}$
   of length $r$ on ${\Bbb A}^2$
  with a specified isomorphism
   $\iota: {\Bbb C}^r\stackrel{\sim}{\rightarrow} H^0({\cal F})$
 can be identified with
  a pair $(m_1,m_2)$ of commuting $r\times r$ matrices$/{\Bbb C}$,
and vice versa, as follows:
\begin{itemize}
 \item[(1)]
 {\it From $({\cal F}, \iota)$ to $(m_1,m_2)$}$\,$:\hspace{1em}
  Given $({\cal F},\iota)$ as said,
  the ${\Bbb C}[z_1,z_2]$-module structure on $ H^0({\cal F})$ and,
   hence, on ${\Bbb C}^r$
   via $\iota:{\Bbb C}^r\stackrel{\sim}{\rightarrow} H^0({\cal F})$,
   gives uniquely a pair $(m_1,m_2)$ as said
   by taking the representation of $(z_1, z_2)$ on ${\Bbb C}^r$.

 \item[(2)]
 {\it From $(m_1,m_2)$ to $({\cal F},\iota)$}$\,$:\hspace{1em}
  Given $(m_1,m_2)$ as said,
  the specification $z_1\mapsto m_1$ and $z_2\mapsto m_2$
   defines a ${\Bbb C}$-algebra homomorphism
   $\varphi_{(m_1,m_2)}:{\Bbb C}[z_1,z_2]\rightarrow \End({\Bbb C}^r)$.
  This realizes ${\Bbb C}^r$ as a ${\Bbb C}[z_1,z_2]$-module
   and, hence, an ${\cal O}_{{\Bbb A}^2}$-module ${\cal F}$,
   together with an isomorphism
    $\iota:{\Bbb C}^r\stackrel{\sim}{\rightarrow}  H^0({\cal F})$.
  Note that
   $\Supp{\cal F}=V(\Ker\varphi_{(m_1,m_2)})
     \simeq \Spec\langle{\mathbf 1}, m_1, m_2 \rangle$,
    where $\langle{\mathbf 1}, m_1, m_2 \rangle$
     is the subalgebra of $\End({\Bbb C}^r)$
     generated by ${\mathbf 1}$, $m_1$, and $m_2$.
\end{itemize}
These two operations are inverse to each other.
One should think of
 $({\cal F},\iota)$ in Item (1) and
 $\varphi_{(m_1,m_2)}$ in Item (2)
 as defining a morphism
 from the {\it fixed/rigidified} Azumaya point with the fundamental module
  $(\Spec{\Bbb C}\,,\,\End({\Bbb C}^r)\,,\,{\Bbb C}^r)$
 to (the fixed) $\Spec {\Bbb C}[z_1,z_2]={\Bbb A}^2$.
The rigidification is given by $\iota$ in Item (1) and
 by expressing the fundamental module on the Azumaya point
  explicitly as ${\Bbb C}^r$ in Item (2).

Recall from [L-Y1: Sec.~4.3] and the references quoted ibidem
the {\it commuting scheme}
 $$
  C_2M_r({\Bbb C})\;=\; \{(m_1,m_2)\,:\, m_1m_2=m_2m_1\}\;
   \subset \End({\Bbb C}^r)\times\End({\Bbb C}^r)
 $$
with the scheme structure from the standard scheme structure
 ${\Bbb A}^{r^2}$ on $\End({\Bbb C}^r)$.
This is an irreducible variety of dimension $r^2+r$.
The universal commuting pair of endomorphisms of ${\Bbb C}^r$
 is given by a section $s$ of
 ${\cal O}_{C_2M_r(\Bbb C)}
   \otimes (\End({\Bbb C}^r)\oplus\End({\Bbb C}^r))$.
The morphism
 $$
  P_{r, {\Bbb A}^2}\;:\;
   C_2M_r({\Bbb C})\; \longrightarrow\; {\mathfrak M}^{D0}_r({\Bbb A}^2)
 $$
 defined by the composition of the correspondences
 $$
  (m_1,m_2)\; \longmapsto\; s(m_1,m_2)\;
   \longmapsto\; ({\cal F},\iota)\; \longmapsto\; {\cal F}
 $$
 realizes $C_2M_r({\Bbb C})$
 as an atlas of the Artin stack ${\mathfrak M}^{D0}_r({\Bbb A}^2)$
 of D$0$-branes of type $r$ on ${\Bbb A}^2$.
The defining $\GL_r({\Bbb C})$-action on ${\Bbb C}^r$
 induces a $\GL_r({\Bbb C})$-action on $C_2M_r({\Bbb C})$
 via
 $$
  (m_1,m_2)\; \stackrel{g}{\longmapsto}\; (gm_1g^{-1}, gm_2g^{-1})\,.
 $$
An orbit of the latter action corresponds to
 an isomorphism class of morphisms
 from the {\it unfixed} Azumaya point$/{\Bbb C}$
 with a fundamental module,
 $(\pt^{\Azscriptsize},E)
  :=(\Spec{\Bbb C}, \End{E}, E)$,
  where $E\simeq {\Bbb C}^r$ abstractly,
 to (the fixed) ${\Bbb A}^2$.

\smallskip

\begin{slemma}
{\bf [closed orbit in $C^2M_r({\Bbb C})$].} {\it
 $\GL_r({\Bbb C})\cdot(m_1,m_2)$ is a closed orbit in $C_2M_r({\Bbb C})$
  if and only if $m_1$ and $m_2$ are simultaneously diagonalizable.
 The closure $\overline{\GL_r({\Bbb C})\cdot (m_1,m_2)}$ of
  every orbit $\GL_r({\Bbb C})\cdot(m_1,m_2)$ in $C_2M_r({\Bbb C})$
  contains a unique closed orbit.
}\end{slemma}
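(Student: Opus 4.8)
The plan is to work through the dictionary set up in the preceding theme, which identifies a point $(m_1,m_2)\in C_2M_r({\Bbb C})$ with a $0$-dimensional coherent sheaf $\mathcal{F}$ of length $r$ on ${\Bbb A}^2$ together with a trivialization $\iota:{\Bbb C}^r\stackrel{\sim}{\rightarrow}H^0(\mathcal{F})$ — equivalently, with a ${\Bbb C}[z_1,z_2]$-module structure on ${\Bbb C}^r$ — under which $\GL_r({\Bbb C})$-orbits are the isomorphism classes of such modules. In this language ``$m_1,m_2$ simultaneously diagonalizable'' means ``the module is semisimple'', i.e.\ a direct sum of $r$ one-dimensional modules (skyscrapers at reduced, not necessarily distinct, points of ${\Bbb A}^2$). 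So the Lemma reduces to the assertion that the orbit of $(m_1,m_2)$ is closed iff its module is semisimple, together with the uniqueness of the closed orbit in each orbit closure. Since $C_2M_r({\Bbb C})$ is affine (closed in $\End({\Bbb C}^r)\times\End({\Bbb C}^r)$) and $\GL_r({\Bbb C})$ is reductive, I may invoke the affine GIT machinery of [M-F-K]: the categorical quotient $C_2M_r({\Bbb C})/\!/\GL_r({\Bbb C})$ exists, and each of its fibers contains a unique closed orbit, which lies in the closure of every orbit in that fiber. This already yields the second statement once the closed orbit in each fiber is identified, so the heart of the matter is the characterization of closed orbits.

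For the ``only if'' direction I would use a one-parameter degeneration. If the module $M$ is not semisimple, pick its socle $\operatorname{soc}(M)=:N$ (a proper, nonzero submodule) and a basis of ${\Bbb C}^r$ whose first $\dim_{\Bbb C}N$ vectors span $N$, so that each $m_i=\left(\begin{smallmatrix}a_i&b_i\\0&d_i\end{smallmatrix}\right)$ is block upper-triangular; conjugating by $g_t=\left(\begin{smallmatrix}tI&0\\0&I\end{smallmatrix}\right)$ replaces $b_i$ by $tb_i$, and letting $t\to0$ puts the pair corresponding to $N\oplus(M/N)$ into $\overline{\GL_r({\Bbb C})\cdot(m_1,m_2)}$. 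If the orbit were closed this would force $M\cong\operatorname{soc}(M)\oplus\bigl(M/\operatorname{soc}(M)\bigr)$, and comparing socles then forces $M/\operatorname{soc}(M)=0$, i.e.\ $M$ semisimple, a contradiction. Iterating the same degeneration on the non-semisimple direct summands shows in addition that $\overline{\GL_r({\Bbb C})\cdot(m_1,m_2)}$ always contains the orbit of the semisimplification $M^{\mathrm{ss}}$ (the graded module of a composition series), which is semisimple with the same Jordan--H\"{o}lder factors as $M$.

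The ``if'' direction is where I expect the main obstacle: showing that a semisimple module already has a closed orbit, equivalently that the unique closed orbit inside $\overline{\GL_r({\Bbb C})\cdot(m_1,m_2)}$ is that of $M^{\mathrm{ss}}$. The key input is that \emph{the multiset of Jordan--H\"{o}lder factors is constant on every orbit closure in $C_2M_r({\Bbb C})$}: the coefficients of $\det(x\cdot{\mathbf 1}-sm_1-tm_2)\in{\Bbb C}[x,s,t]$ are $\GL_r({\Bbb C})$-invariant regular functions, hence constant on orbit closures, and for a commuting (so simultaneously triangularizable) pair this polynomial equals $\prod_q(x-sq_1-tq_2)$ over the Jordan--H\"{o}lder factors $q=(q_1,q_2)\in{\Bbb A}^2$, so it records exactly those factors. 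Because the invariant ring of $C_2M_r({\Bbb C})$ is generated by the traces $\Tr(m_1^am_2^b)$ (the first fundamental theorem for matrix invariants), the fiber of the quotient through $(m_1,m_2)$ is precisely the set of commuting pairs with this fixed Jordan--H\"{o}lder multiset; its unique closed orbit is then forced to be $\GL_r({\Bbb C})\cdot M^{\mathrm{ss}}$, since by [M-F-K] that closed orbit lies in $\overline{\GL_r({\Bbb C})\cdot M^{\mathrm{ss}}}$ and, by the ``only if'' direction, corresponds to a semisimple module with the fiber's Jordan--H\"{o}lder factors, hence to $M^{\mathrm{ss}}$. Feeding this back: the orbit of $(m_1,m_2)$ is closed iff $M\cong M^{\mathrm{ss}}$, iff $M$ is semisimple, iff $m_1,m_2$ are simultaneously diagonalizable; and the unique closed orbit in any $\overline{\GL_r({\Bbb C})\cdot(m_1,m_2)}$ is $\GL_r({\Bbb C})\cdot M^{\mathrm{ss}}$. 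The only genuinely delicate bookkeeping is the simultaneous triangularization that turns the invariant polynomial into Jordan--H\"{o}lder data, together with the identification of the invariant ring with traces of monomials; both are standard and I would cite rather than reprove them.
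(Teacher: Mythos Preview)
Your argument is correct, but the paper's own proof is considerably terser and more direct. Rather than filtering by the socle and iterating, the paper simultaneously upper-triangularizes the commuting pair in one step (a standard linear-algebra fact for commuting matrices) and then applies the single one-parameter subgroup $\Diag(1,t^{-1},\ldots,t^{-(r-1)})$; as $t\to 0$ every strictly upper-triangular entry is scaled by a positive power of $t$ and the limit $(m_1^0,m_2^0)$ is diagonal. Uniqueness of this limit orbit is then read off from the joint eigenvalue data (your Jordan--H\"older multiset), and that is essentially the whole proof. Your version trades this one-shot degeneration for the module-theoretic framing (semisimple $=$ simultaneously diagonalizable), an inductive socle degeneration, and an explicit appeal to affine GIT together with the invariant $\det(x\cdot{\mathbf 1}-sm_1-tm_2)$ to pin down the fiber and force the closed orbit to be $M^{\mathrm{ss}}$. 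What you gain is that the ``if'' direction (semisimple $\Rightarrow$ closed) is made fully explicit, whereas the paper leaves it implicit in the uniqueness claim; your argument also generalizes verbatim to representations of other finite-dimensional algebras in the spirit of King's paper [Ki] cited in the bibliography. What the paper's approach buys is brevity and the avoidance of any appeal to the structure of the invariant ring: the explicit $1$-PS does all the work. Incidentally, the first fundamental theorem you invoke is stronger than needed; the $\det$ polynomial alone already shows that any orbit in the same quotient fiber as $M^{\mathrm{ss}}$ has the same Jordan--H\"older multiset, which is all your closedness argument requires.
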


\begin{proof}
 $\GL_r({\Bbb C})\cdot(m_1,m_2)$ can be represented by $(m_1,m_2)$
  with $m_1$ and $m_2$, say, upper simultaneously triangulated.
 The $t\rightarrow 0$ limit $(m_1^0,m_2^0)$ of such $(m_1,m_2)$
  by the $1$-parameter subgroup
   $\Diag(1, t^{-1}, \,\cdots\,, t^{-(r-1)})$
  is then diagonal.
 Up to the $\GL_r({\Bbb C})$-action,
  $(m_1^0, m_2^0)$ is uniquely determined by the orbit
   $\GL_r({\Bbb C})\cdot (m_1,m_2)$
  as the former is determined by the characteristic polynomials
   $\det(\lambda I-m_1)$ and $\det(\lambda I-m_2)$
   of $m_1$ and $m_2$ up to simultaneous permutations.

\end{proof}

\smallskip

\begin{scorollary}
{\bf [categorical quotient of ${\mathfrak M}^{D0}_r({\Bbb A}^2)$].} {\it
 The categorical quotient of ${\mathfrak M}^{D0}_r({\Bbb A}^2)$,
  which by definition is the categorical quotient of
   the atlas $C_2M_r({\Bbb C})$ by the above $\GL_r({\Bbb C})$-action,
  is given by the symmetric product $S^r({\Bbb A}^2)$ of ${\Bbb A}^2$.
}\end{scorollary}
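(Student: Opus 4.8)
\noindent
\emph{Proposed approach.}
Write $Q := C_2M_r({\Bbb C})/\!\!/\GL_r({\Bbb C}) = \Spec\,{\Bbb C}[C_2M_r({\Bbb C})]^{\GL_r({\Bbb C})}$ for the categorical quotient in question and $\pi:C_2M_r({\Bbb C})\to Q$ for the quotient morphism. The plan is to identify $Q$ with $S^r({\Bbb A}^2)$ by exhibiting an explicit isomorphism on coordinate rings, built from trace functions on one side and multisymmetric power sums on the other, and then to read off the point-set bijection from Lemma~2.4. \emph{Step 1 (a $\GL_r({\Bbb C})$-invariant morphism to the symmetric product).} For $k,l\ge 0$ put $T_{k,l}:=\Tr(m_1^k m_2^l)\in{\Bbb C}[C_2M_r({\Bbb C})]$; this is well-defined because $m_1,m_2$ commute and is $\GL_r({\Bbb C})$-invariant. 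On the other side, in characteristic $0$ the ring ${\Bbb C}[S^r({\Bbb A}^2)]={\Bbb C}[({\Bbb A}^2)^r]^{S_r}$ of multisymmetric polynomials in $r$ pairs of variables is generated by the polarized power sums $p_{k,l}=\sum_{i=1}^r x_i^k y_i^l$; fix a presentation ${\Bbb C}[t_{k,l}]\twoheadrightarrow {\Bbb C}[S^r({\Bbb A}^2)]$, $t_{k,l}\mapsto p_{k,l}$, with kernel $J$. I would check that $t_{k,l}\mapsto T_{k,l}$ kills $J$: on the nonempty open locus $V_0$ where $m_1$ is regular semisimple the pair $(m_1,m_2)$ is simultaneously diagonalizable, with $T_{k,l}=\sum_i\lambda_i^k\mu_i^l$ in the common eigenvalue-pairs $(\lambda_i,\mu_i)$, so the image of any $f\in J$ vanishes on $V_0$; since $C_2M_r({\Bbb C})$ is an irreducible variety, $V_0$ is dense and that image vanishes identically. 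This produces a $\GL_r({\Bbb C})$-invariant morphism $\tau:C_2M_r({\Bbb C})\to S^r({\Bbb A}^2)$, $(m_1,m_2)\mapsto(T_{k,l})_{k,l}$, which by the universal property of the categorical quotient factors as $\tau=\bar\tau\circ\pi$ for a unique $\bar\tau:Q\to S^r({\Bbb A}^2)$, equivalently a ring map $\bar\tau^{\#}:{\Bbb C}[S^r({\Bbb A}^2)]\to{\Bbb C}[C_2M_r({\Bbb C})]^{\GL_r({\Bbb C})}$, $p_{k,l}\mapsto T_{k,l}$.

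\emph{Step 2 ($\bar\tau^{\#}$ is surjective).} Here I would invoke the first fundamental theorem for matrix invariants (Procesi--Sibirskii): ${\Bbb C}[\End({\Bbb C}^r)\times\End({\Bbb C}^r)]^{\GL_r({\Bbb C})}$ is generated by the trace functions $\Tr(w(m_1,m_2))$ over all words $w$. On $C_2M_r({\Bbb C})$ every such word restricts to some $m_1^k m_2^l$; and since $\GL_r({\Bbb C})$ is reductive, the restriction map ${\Bbb C}[\End({\Bbb C}^r)^2]^{\GL_r({\Bbb C})}\to{\Bbb C}[C_2M_r({\Bbb C})]^{\GL_r({\Bbb C})}$ is onto. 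Hence ${\Bbb C}[C_2M_r({\Bbb C})]^{\GL_r({\Bbb C})}$ is generated by the $T_{k,l}$, i.e.\ $\bar\tau^{\#}$ is surjective.

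\emph{Step 3 ($\bar\tau^{\#}$ is injective, and conclusion).} Suppose $F\in{\Bbb C}[S^r({\Bbb A}^2)]$ has $\bar\tau^{\#}(F)=0$. Restricted to $V_0$, $\tau$ is the ``joint eigenvalue'' morphism $(m_1,m_2)\mapsto\{(\lambda_i,\mu_i)\}$, whose image is the dense open subset $U_0\subset S^r({\Bbb A}^2)$ of $r$-tuples with pairwise distinct first coordinates (realized by $(\Diag(\lambda_i),\Diag(\mu_i))$); so $F$ vanishes on $U_0$, hence on the reduced variety $S^r({\Bbb A}^2)$, so $F=0$. Thus $\bar\tau^{\#}$ is an isomorphism and $\bar\tau:Q\,\simeqrightarrow\,S^r({\Bbb A}^2)$. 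On closed points this recovers the bijection between closed $\GL_r({\Bbb C})$-orbits in $C_2M_r({\Bbb C})$ --- the simultaneously diagonalizable pairs up to conjugation, by Lemma~2.4 --- and unordered $r$-tuples of points of ${\Bbb A}^2$, as expected.

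I do not expect a serious obstacle, but the least routine ingredient is the first fundamental theorem invoked in Step 2. A self-contained alternative, avoiding it, is to observe that $S^r({\Bbb A}^2)=({\Bbb A}^2)^r/S_r$ is normal (quotient of a smooth affine variety by a finite group in characteristic $0$), that $\bar\tau$ is bijective on points by Lemma~2.4, and that $\bar\tau$ is birational because over $U_0$ the morphism $\tau$ is a geometric quotient: over $V_0$ every $\GL_r({\Bbb C})$-orbit has the constant dimension $r^2-r$ (the stabilizer being the maximal torus centralizing a regular semisimple element) and each $\tau$-fibre is a single orbit, so $V_0/\!\!/\GL_r({\Bbb C})\to U_0$ is an isomorphism; Zariski's Main Theorem then upgrades the bijective birational $\bar\tau$ onto the normal $S^r({\Bbb A}^2)$ to an isomorphism. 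One should also note the minor point that the density arguments in Steps~1 and~3 use that $C_2M_r({\Bbb C})$ is reduced, which is part of the cited fact that it is an irreducible variety.
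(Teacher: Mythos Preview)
Your argument is correct. In the paper, however, this statement is not given an independent proof: it is recorded as an immediate corollary of Lemma~2.2 (closed $\GL_r({\Bbb C})$-orbits are exactly the simultaneously diagonalizable pairs, and the closure of any orbit contains a unique such), so that closed points of the categorical quotient are in bijection with unordered $r$-tuples in ${\Bbb A}^2$; the ring-theoretic identification ${\Bbb C}[C_2M_r({\Bbb C})]^{\GL_r({\Bbb C})}\simeq{\Bbb C}[S^r({\Bbb A}^2)]$ is then delegated to the reference~[Va] (and~[L-Y1]) in the subsequent remark.

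What you do differently is supply exactly the content the paper outsources: the explicit trace/power-sum comparison, Procesi--Sibirskii for surjectivity, and a density argument for injectivity. This makes your write-up self-contained, at the cost of invoking the first fundamental theorem for matrix invariants. Your alternative via Zariski's Main Theorem is closer in spirit to the paper's own line --- it promotes the point-set bijection coming from Lemma~2.2 to a scheme isomorphism using normality of $S^r({\Bbb A}^2)$ --- and is a good fallback if one wants to avoid the invariant-theory input. One small caution on that alternative: to apply ZMT you should also note that $\bar\tau$ is finite (or at least quasi-finite and proper onto its image), which follows here because $Q$ and $S^r({\Bbb A}^2)$ are both affine and $\bar\tau^{\#}$ is integral (the $p_{k,l}$ with $k+l\le r$ already generate ${\Bbb C}[S^r({\Bbb A}^2)]$ and their images, together with the Cayley--Hamilton relations, force integrality); the birational-plus-bijective statement alone is not quite enough without some finiteness.
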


\smallskip

\begin{sremark} {\rm
A point in $S^r({\Bbb A}^2)$ corresponds to an isomorphism class
 of morphisms from $(\pt^{\Azscriptsize},E)$ to ${\Bbb A}^2$
 whose associated surrogates are reduced.
The above discussion also realizes
 $S^r({\Bbb A}^2)$
 as $\Spec (R(C_2M_r({\Bbb C}))^{GL_r({\Bbb C})})$,
 where $R(C_2M_r({\Bbb C}))^{GL_r({\Bbb C})}$
  is the ring of $\GL_r({\Bbb C})$-invariant functions
   in the function/coordinate ring $R(C_2M_r({\Bbb C}))$ of
   $C_2M_r({\Bbb C})$;
 see [Va] for a discussion in general dimensions and
  [L-Y1] for more references.
}\end{sremark}

\smallskip

Once having the categorical quotient of $C_2M_r({\Bbb C})$
  under the $\GL_r({\Bbb C})$-action,
 and hence of the stack ${\mathfrak M}^{D0}_r({\Bbb A}^2)$,
it is natural to attempt to follow [Ki] and [M-F-K],
  and the related discussion in [Na]
 to consider a geometric-invariant-theory setting
 on the $\GL_r({\Bbb C})$-variety $C_2M_r({\Bbb C})$
 to produce birational models of $S^r({\Bbb A}^2)$.
The would-be procedure goes as follows:
Let $L$ be the trivial line bundle ${\cal O}_{C_2M_r({\Bbb C})}$
  on $C_2M_r({\Bbb C})$
 with a linearization of the $\GL_r({\Bbb C})$-action
 through
  a character $\chi:\GL_r({\Bbb C})\rightarrow {\Bbb C}^{\times}$
  via the determinant function $\det$
  to some positive power:
 $g\cdot((m_1,m_2), z)=(g\cdot (m_1,m_2), \chi(g)^{-1}z)$.
A would-be birational model of $S^r({\Bbb A}^2)$
 is then obtained by taking the categorial quotient
 $(C_2M_r({\Bbb C}))^{ss}(\chi)/\!\!\sim$ of
 the $\GL_r({\Bbb C})$-action on the semistable locus
 $C_2M_r({\Bbb C})^{ss}(\chi)$ in $C_2M_r({\Bbb C})$
 with respect to the linearization on $L$ specified by $\chi$.
However, this won't work here
 as $C_2M_r({\Bbb C})^{ss}(\chi)= \emptyset$.
The reason is that
 the stabilizer of the $\GL_r({\Bbb C})$-action on $C_2M_r({\Bbb C})$
  at a point on a principal orbit is isomorphic to
  $({\Bbb C}^{\times})^r$;
this is too big to allow the $\chi$-linearized $L$ to have
 $\GL_r({\Bbb C})$-invariant sections except the zero-section.
This is why we need to employ
 a netted categorical quotient construction,
 instead of a direct GIT-quotient construction.
We will use a construction of Nakajima in [Na]
 to guide our netted categorical quotient construction
 relevant to our goal.

To remedy the above issue,
consider instead
 the total space ${\Bbb E}$
  of the universal fundamental module
  ${\cal O}_{C_2M_r({\Bbb C})}\otimes{\Bbb C}^r$
  on $C_2M_r({\Bbb C})$
 and employ the above GIT setting to ${\Bbb E}\,$:
\begin{itemize}
 \item[$\bullet$]
  {\it the $\GL_r({\Bbb C})$-action on ${\Bbb E}\,$}:\hspace{.6em}
  The $\GL_r({\Bbb C})$-action on $C_2M_r({\Bbb C})$
   lifts to a $\GL_r({\Bbb C})$-action on ${\Bbb E}$
   $$
    (m_1,m_2; v)\;
     \stackrel{g}{\longmapsto}\; (gm_1g^{-1}, gm_2g^{-1}; gv)\,.
   $$
  The stabilizer of $(m_1,m_2;v)\in $ a principal orbit in ${\Bbb E}$
   is now trivial.

 \item[$\bullet$]
  {\it the line bundle $L$ and its linearization}$\,$:\hspace{1em}
  Let $L$ be the trivial line bundle ${\cal O}_{\Bbb E}$
   with the linearlization specified by a character $\chi$
   as in the previous discussion:
   $$
    ((m_1,m_2; v)\,,\, z)\;
     \stackrel{g}{\longmapsto}\;
     (g\cdot (m_1,m_2; v)\,,\, \chi(g)^{-1}z)\,.
   $$
\end{itemize}
Thus, the above GIT-construction setting of [Ki] and [M-F-K]
  is now more likely to be applicable
  to the $\GL_r({\Bbb C})$-variety ${\Bbb E}$
 and, indeed, the detail is worked out by Nakajima in [Na].

\smallskip

\begin{stheorem}
{\bf [Hilbert scheme $({\Bbb A}^2)^{[r]}$ from GIT-quotient].}
{\rm ([Na: Theorem 1.9 and Lemma 3.25].)}
{\it
 The GIT-quotient
   ${\Bbb E}\,/\!/_{\chi}\GL_r({\Bbb C})
    := {\Bbb E}^{ss}(\chi)/\GL_r({\Bbb C})$
   of ${\Bbb E}$
  with respect to $L$ with the linearization specified by $\chi$
  is canonically isomorphic to the Hilbert scheme $({\Bbb A}^2)^{[r]}$
  of $0$-dimensional subschemes of length $r$ on ${\Bbb A}^2$.
}\end{stheorem}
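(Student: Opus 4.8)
The plan is to present both sides as the \emph{same} explicit orbit space and then match them. Recall that a point of ${\Bbb E}$ is a triple $(m_1,m_2;v)$ with $m_1,m_2\in\End({\Bbb C}^r)$ commuting and $v\in{\Bbb C}^r$, that $\GL_r({\Bbb C})$ acts by $(m_1,m_2;v)\mapsto(gm_1g^{-1},gm_2g^{-1};gv)$, and that $L={\cal O}_{\Bbb E}$ is linearized through a positive power $\det^N$ of the determinant. This is exactly the ADHM/Nakajima presentation of $({\Bbb A}^2)^{[r]}$ as a quiver variety for the one-loop quiver with one-dimensional framing, so the real content is the identification of the GIT semistable locus and the passage to the Hilbert scheme; I would either carry this out directly or invoke [Na: Theorem~1.9 and Lemma~3.25] for the stability step.

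First I would determine the semistable locus via the Hilbert--Mumford numerical criterion. A one-parameter subgroup of $\GL_r({\Bbb C})$ is, up to conjugation, $\Diag(t^{w_1},\dots,t^{w_r})$ with integer weights $w_i$; the limit $\lim_{t\to0}\lambda(t)\cdot(m_1,m_2;v)$ exists precisely when a certain coordinate subspace determined by the signs of the $w_i$ is $(m_1,m_2)$-invariant and contains $v$, and the Mumford weight of $L$ along such a $\lambda$ is a fixed multiple of $\sum_i w_i$. Keeping careful track of signs for the $\det^N$-linearization, this yields the clean conclusion that $(m_1,m_2;v)$ is $\chi$-semistable if and only if it is $\chi$-stable, if and only if $v$ is a \emph{cyclic vector}: ${\Bbb C}[m_1,m_2]\,v={\Bbb C}^r$, equivalently no proper $(m_1,m_2)$-invariant subspace of ${\Bbb C}^r$ contains $v$. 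Next I would check the action on ${\Bbb E}^{ss}(\chi)$ is free: if $g$ fixes $(m_1,m_2;v)$ then $g$ commutes with $m_1,m_2$ and fixes $v$, hence fixes every $f(m_1,m_2)v$ and therefore all of ${\Bbb C}[m_1,m_2]v={\Bbb C}^r$, so $g={\mathbf 1}$. Since semistable $=$ stable with trivial stabilizers, the GIT quotient is a geometric quotient on a quasi-projective scheme and ${\Bbb E}^{ss}(\chi)\to{\Bbb E}^{ss}(\chi)/\GL_r({\Bbb C})$ is a principal $\GL_r({\Bbb C})$-bundle, Zariski-locally trivial as $\GL_r$ is special.

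Then I would write down two mutually inverse morphisms. In one direction, to $(m_1,m_2;v)$ with $v$ cyclic attach the ideal
$$
 I_{(m_1,m_2;v)}\;=\;\{\,f\in{\Bbb C}[z_1,z_2]\,:\,f(m_1,m_2)v=0\,\}\,;
$$
since $f\mapsto f(m_1,m_2)v$ is a surjection of ${\Bbb C}[z_1,z_2]$-modules onto ${\Bbb C}^r$, we get ${\Bbb C}[z_1,z_2]/I_{(m_1,m_2;v)}\cong{\Bbb C}^r$ of length $r$, i.e.\ a point of $({\Bbb A}^2)^{[r]}$, manifestly $\GL_r({\Bbb C})$-invariant. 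Carrying this out in families -- the universal commuting pair and section on $C_2M_r({\Bbb C})$ restrict over ${\Bbb E}^{ss}(\chi)$ to a surjection ${\cal O}_{{\Bbb E}^{ss}(\chi)\times{\Bbb A}^2}\to{\cal G}$ with ${\cal G}$ finite and flat of rank $r$ over ${\Bbb E}^{ss}(\chi)$ -- produces a $\GL_r({\Bbb C})$-invariant morphism to $({\Bbb A}^2)^{[r]}$ which descends, by the freeness step, to $\Phi:{\Bbb E}^{ss}(\chi)/\GL_r({\Bbb C})\to({\Bbb A}^2)^{[r]}$. In the other direction, on $({\Bbb A}^2)^{[r]}$ the pushforward ${\cal W}:=\pr_{1\ast}{\cal O}_{\cal Z}$ of the structure sheaf of the universal length-$r$ subscheme is a rank-$r$ bundle with commuting endomorphisms $M_1,M_2$ (multiplication by $z_1,z_2$) and a fibrewise-cyclic global section $V$ (the image of $1$); trivializing ${\cal W}$ over its frame bundle turns $(M_1,M_2,V)$ into a $\GL_r({\Bbb C})$-equivariant map to ${\Bbb E}^{ss}(\chi)$, descending to $\Psi:({\Bbb A}^2)^{[r]}\to{\Bbb E}^{ss}(\chi)/\GL_r({\Bbb C})$. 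A point-chase -- recovering the length-$r$ algebra $A$ with its $z_i$-action and the class of $1\in A$, which pins down $(m_1,m_2;v)$ up to $\GL_r({\Bbb C})$ -- then shows $\Phi\circ\Psi$ and $\Psi\circ\Phi$ are the identities, so $\Phi$ is the asserted isomorphism; naturality of all constructions gives the ``canonical'' in the statement.

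The main obstacle is the stability computation of the second paragraph: one must fix the sign conventions in the numerical criterion so that the destabilizing flags come out to be exactly the proper $(m_1,m_2)$-invariant subspaces through $v$, verify that semistability already forces stability -- so there are no strictly semistable points and the orbit space is a genuine geometric quotient rather than a coarse bijective parametrization -- and confirm that this set-theoretic quotient really is the scheme $\Proj$ of the graded ring of invariant sections of the powers of $L$ (here quasi-projective over the affine quotient ${\Bbb E}/\!\!/\GL_r({\Bbb C})$). The module-theoretic dictionary of the third paragraph and the freeness argument are then essentially formal; alternatively one quotes [Na: Theorem~1.9 and Lemma~3.25] wholesale, which is precisely the route the excerpt takes.
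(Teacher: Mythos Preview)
Your proposal is correct and in fact goes well beyond what the paper does: the paper gives no proof at all for this statement, simply citing [Na: Theorem~1.9 and Lemma~3.25]. Your outline---Hilbert--Mumford identifying ${\Bbb E}^{ss}(\chi)$ with the cyclic locus, freeness of the action there, and the explicit dictionary $(m_1,m_2;v)\leftrightarrow I_{(m_1,m_2;v)}$---is precisely Nakajima's argument, as you yourself note in your last sentence.
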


\smallskip

\begin{slemma}
{\bf [netted categorical quotient on $C_2M_r({\Bbb C})$].}
{\rm ([L-Y1: Proposition 4.3.3].)}
 Let $\pi:{\Bbb E}\rightarrow C_2M_r({\Bbb C})$
  be the defining projection morphism.
 Then
  $$
   \pi({\Bbb E}^{ss}(\chi))/\GL_r({\Bbb C})\;
    \simeq\; {\Bbb E}^{ss}(\chi)/\GL_r({\Bbb C})\;
    \simeq\; ({\Bbb A}^2)^{[r]}
  $$
 canonically.
\end{slemma}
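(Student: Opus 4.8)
The second isomorphism ${\Bbb E}^{ss}(\chi)/\GL_r({\Bbb C})\simeq({\Bbb A}^2)^{[r]}$ is precisely Theorem~2.5 above (the GIT-quotient description of the Hilbert scheme), so the entire content lies in the first isomorphism: one must show that passing from the semistable locus in ${\Bbb E}$ to its image under $\pi$ in $C_2M_r({\Bbb C})$ does not change the categorical quotient. The plan is to make ${\Bbb E}^{ss}(\chi)$ explicit, analyse the fibres of $\pi$ over its image, and then see that $\GL_r({\Bbb C})$-invariants descend along $\pi$. First I would recall, from King's semistability criterion in the form worked out for this framed situation in [Na] (the same input used for Theorem~2.5), that $(m_1,m_2;v)\in{\Bbb E}=C_2M_r({\Bbb C})\times{\Bbb C}^r$ is $\chi$-semistable precisely when $v$ is a \emph{cyclic vector} for the commutative subalgebra ${\Bbb C}[m_1,m_2]\subseteq\End({\Bbb C}^r)$; that $\GL_r({\Bbb C})$ acts freely on ${\Bbb E}^{ss}(\chi)$ (and $\chi$-semistable $=$ $\chi$-stable there), so there is a geometric quotient $q\colon{\Bbb E}^{ss}(\chi)\to({\Bbb A}^2)^{[r]}$, a principal $\GL_r({\Bbb C})$-bundle; and that $q$ sends $(m_1,m_2;v)$ to the length-$r$ quotient ${\Bbb C}[z_1,z_2]\twoheadrightarrow{\Bbb C}^r$, $f\mapsto f(m_1,m_2)v$. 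It follows that $U:=\pi({\Bbb E}^{ss}(\chi))=\{(m_1,m_2)\in C_2M_r({\Bbb C}):{\Bbb C}^r\ \text{is a cyclic}\ {\Bbb C}[m_1,m_2]\text{-module}\}$, which is a $\GL_r({\Bbb C})$-stable \emph{open} subset of $C_2M_r({\Bbb C})$ (the union of the open loci on which a fixed standard basis vector is cyclic).

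The heart of the matter is the description of the fibres of the restriction $\pi|\colon{\Bbb E}^{ss}(\chi)\to U$. Fix $(m_1,m_2)\in U$ and set $A={\Bbb C}[m_1,m_2]$. Faithfulness and cyclicity force ${\Bbb C}^r\cong A$ as $A$-modules, so $\dim_{\Bbb C}A=r$ and a cyclic vector is exactly a free generator of $A$; hence the set of cyclic vectors over $(m_1,m_2)$ is a single orbit of the unit group $A^{\times}$, and since the commutant $\Comm(m_1,m_2)$ of a commutative algebra acting cyclically equals that algebra itself, $A^{\times}=\Stab_{\GL_r({\Bbb C})}(m_1,m_2)$. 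Thus \emph{each fibre of $\pi|$ is a single orbit of the isotropy group at the corresponding point of $U$}. Two consequences: $\pi|$ is surjective and induces a bijection on $\GL_r({\Bbb C})$-orbits; and, more importantly, every $\GL_r({\Bbb C})$-invariant regular function on ${\Bbb E}^{ss}(\chi)$ (indeed every $\GL_r({\Bbb C})$-equivariant section of a $\GL_r({\Bbb C})$-linearized coherent sheaf) is automatically constant along the fibres of $\pi|$, because those fibres are isotropy-orbits. Since $\pi|$ is a smooth surjection, pull-back along $\pi|$ then identifies the $\GL_r({\Bbb C})$-invariants on $U$ with those on ${\Bbb E}^{ss}(\chi)$; gluing $\Spec$ of these invariant rings over a $\GL_r({\Bbb C})$-invariant affine open cover gives $\pi({\Bbb E}^{ss}(\chi))/\GL_r({\Bbb C})\simeq{\Bbb E}^{ss}(\chi)/\GL_r({\Bbb C})$, which together with Theorem~2.5 completes the chain of isomorphisms.

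To keep the identification canonical and concrete I would also write down the comparison maps. The assignment $\bar\beta\colon U\to({\Bbb A}^2)^{[r]}$ sending $(m_1,m_2)$ to the colength-$r$ ideal $\Ker({\Bbb C}[z_1,z_2]\to{\Bbb C}[m_1,m_2])$ is manifestly $\GL_r({\Bbb C})$-invariant and involves no choice of cyclic vector; it is a morphism because on the open locus where a chosen standard basis vector $e_i$ is cyclic it is the classifying map of the explicit flat family $(m_1,m_2)\mapsto({\Bbb C}[z_1,z_2]\twoheadrightarrow{\Bbb C}^r,\ f\mapsto f(m_1,m_2)e_i)$ of length-$r$ quotients of ${\cal O}_{{\Bbb A}^2}$, so the universal property of the Hilbert scheme applies and these local maps glue. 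One has $\bar\beta\circ\pi=q$; hence $\bar\beta$ descends to a morphism $U/\GL_r({\Bbb C})\to({\Bbb A}^2)^{[r]}$ which, by $\bar\beta\circ\pi=q$ and surjectivity of $\pi$, is inverse to the morphism $({\Bbb A}^2)^{[r]}={\Bbb E}^{ss}(\chi)/\GL_r({\Bbb C})\to U/\GL_r({\Bbb C})$ induced by $\pi|$.

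I expect the main obstacle to be at the interfaces rather than in any lengthy computation: (a) importing cleanly from [Na] the cyclic-vector description of ${\Bbb E}^{ss}(\chi)$ together with the fact that $\chi$-semistable $=$ $\chi$-stable there, which is what forces the fibres of $\pi|$ to be isotropy-orbits; and (b) giving a clean meaning to ``$\pi({\Bbb E}^{ss}(\chi))/\GL_r({\Bbb C})$'' for the possibly non-affine invariant open $U$ under a \emph{non-free} $\GL_r({\Bbb C})$-action. The substantive point in (b) is exactly the descent-of-invariants statement above, whose only genuine input is that the isotropy group at every $(m_1,m_2)\in U$ acts transitively on the framing vectors in the fibre $\pi|^{-1}(m_1,m_2)$; once that is in place, the rest is formal and the stated isomorphisms follow.
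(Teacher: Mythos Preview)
Your argument is correct, and it reaches the same endpoint as the paper's proof through a genuinely different route. The paper fixes a nonzero vector $v_0\in{\Bbb C}^r$, slices ${\Bbb E}^{ss}(\chi)$ along $C_2M_r({\Bbb C})\times\{v_0\}$, and works with the residual $\IGL_{r-1}({\Bbb C})=\Stab(v_0)$-action on that slice; a dimension count (the orbit $\GL_r({\Bbb C})\cdot(m_1,m_2;v_0)$ fibers over ${\Bbb C}^r-\{{\mathbf 0}\}$ with $\IGL_{r-1}({\Bbb C})$-fibers, forcing $\dimm\Stab(m_1,m_2)=r$ and hence $\Stab(m_1,m_2)\cdot v_0$ open dense in ${\Bbb C}^r$) then shows that the $\IGL_{r-1}({\Bbb C})$-orbits in the slice project bijectively onto the $\GL_r({\Bbb C})$-orbits in $\pi({\Bbb E}^{ss}(\chi))$. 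You instead go straight to the algebraic structure of the fibre of $\pi|$ over $(m_1,m_2)$: it is the set of cyclic vectors, a single torsor under $A^{\times}={\Bbb C}[m_1,m_2]^{\times}=\Stab_{\GL_r({\Bbb C})}(m_1,m_2)$ because the commutant of a commutative algebra acting cyclically is the algebra itself; descent of invariants along the smooth surjection $\pi|$ is then immediate.

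What each buys: the paper's slice-and-count argument is self-contained and does not invoke the cyclic-vector criterion or the commutant computation explicitly, extracting the needed orbit structure from dimensions alone; your approach is algebraically cleaner, makes the isotropy-torsor structure of the fibres transparent from the outset, and produces the frame-free classifying morphism $\bar\beta:U\to({\Bbb A}^2)^{[r]}$ as a bonus. Both ultimately rest on the same fact---that the isotropy group at each point of $U$ acts transitively on the semistable framing vectors above it---but arrive at it from opposite directions.
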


\smallskip

\noindent
This is a rephrasing of [L-Y1: Proposition 4.3.3];
 we give another proof below that fits better the current setting.
Note that,
 as $C_2M_r({\Bbb C})$ is irreducible and
  $\pi:{\Bbb E}\rightarrow C_2M_r({\Bbb C})$ is an open morphism,
 $\pi({\Bbb E}^{ss}(\chi))$ is open and dense in $C_2M_r({\Bbb C})$.

\smallskip

\begin{proof}
 Let
  $R({\Bbb E})$ be the coordinate ring of the affine variety ${\Bbb E}$
   and
  $R({\Bbb E})^{\chi,n}$ be the ${\Bbb C}$-subspace of $R({\Bbb E})$
   that consists of $f\in R({\Bbb E})$ that satisfies
    $f(g\cdot(m_1,m_2; v))= \chi(g)^n f(m_1,m_2; v)$.
 Then, by definition,
  $(m_1,m_2; v)\in {\Bbb E}^{ss}(\chi)$
  if there exists an $f\in R({\Bbb E})^{\chi,n}$ with $n\ge 1$
   such that $f(m_1, m_2; v)\ne 0$.
 In our case,
   as $\chi$ is a positive power of the determinant function on
    $\GL_r({\Bbb C})$ and
   the stabilizer $\Stab(\,\cdot\,)$ is an open subset
    of an affine space,
  such an $f$ can exist only if $\Stab(m_1, m_2; v)$ is trivial.
 Thus ${\Bbb E}^{ss}(\chi)$
  is identical to the stable locus ${\Bbb E}^s(\chi)$
  of $\GL_r({\Bbb C})$-action on ${\Bbb E}$ and
  the quotient
   ${\Bbb E}^{ss}(\chi) \rightarrow {\Bbb E}^{ss}(\chi)/\GL_r({\Bbb C})$
   is a geometric quotient;
  indeed, a $\GL_r({\Bbb C})$-bundle.

 For convenience,
  express ${\Bbb E}$ canonically as $C_2M_r({\Bbb C})\times{\Bbb C}^r$
   in the analytic language/notation.
 Then, for $(m_1, m_2, v)\in {\Bbb E}^{ss}(\chi)$,
  the projection map
   $\GL_r({\Bbb C})\cdot(m_1, m_2; v)\rightarrow {\Bbb C}^r$
   is a bundle map over ${\Bbb C}^r-\{{\mathbf 0}\}$
   with fiber the inhomogeneous general linear group
   $\IGL_{r-1}({\Bbb C})$
   (i.e.\ the affine transformation group
           of the vector space ${\Bbb C}^{r-1}$).
 This shows
  that for any $(m_1, m_2; v)\in {\Bbb E}^{ss}(\chi)$,
  $$
   \dimm
    ( \GL_r({\Bbb C})\cdot (m_1, m_2; v)
      \cap (C_2M_r({\Bbb C})\times\{v\}) )\;
   =\;  r^2-r
  $$
  and that, for any fixed $v_0\ne {\mathbf 0}$,
  $$
   {\Bbb E}^{ss}(\chi)/\GL_r({\Bbb C})\;
    =\; ({\Bbb E}^{ss}(\chi)\cap (C_2M_r({\Bbb C})\times\{v_0\}))
          /\IGL_{r-1}({\Bbb C})\,.
  $$

 Identify $\Stab(v_0)$ of the $\GL_r({\Bbb C})$-action on ${\Bbb C}^r$
  with $\IGL_{r-1}({\Bbb C})$.
 As, for $(m_1, m_2, v_0)\in {\Bbb E}^{ss}(\chi)$,
  \begin{eqnarray*}
   \lefteqn{
    \IGL_{r-1}({\Bbb C})\cdot(m_1, m_2; v_0)\;
     =\; \GL_r({\Bbb C})\cdot(m_1, m_2; v_0)
            \cap (C_2M_r({\Bbb C})\times\{v_0\})   }\\
   && \simeq\;
       \pi(\IGL_{r-1}({\Bbb C})\cdot(m_1, m_2; v_0))  \\
   && \subset\;  \pi(\GL_r({\Bbb C})\cdot(m_1, m_2; v_0))\;
       \subset\;  \GL_r({\Bbb C})\cdot (m_1, m_2)\,,
  \end{eqnarray*}
  and $\IGL_{r-1}({\Bbb C})\cdot(m_1, m_2; v_0)$ has dimension $r^2-r$,
 the orbit $\GL_r({\Bbb C})\cdot (m_1, m_2)$ in $C_2M_r({\Bbb C})$
  must also have dimension $r^2-r$
  and the dimension of $\Stab(m_1, m_2)$ must be $r$.
 As $\Stab(m_1, m_2; v_0)$ is trivial,
  this implies that
   $\Stab(m_1, m_2)\cdot v_0$ is open dense in ${\Bbb C}^r$.
 Translating this to any
  $(m_1^{\prime}, m_2^{\prime}; v_0)
   \in \IGL_{r-1}({\Bbb C})\cdot (m_1, m_2; v_0)$
  via the $\IGL_{r-1}({\Bbb C})$-action,
 this implies that
  $$
   \GL_r({\Bbb C})\cdot (m_1, m_2; v_0)\;
   \subset\;
    \pi^{-1}( \pi( \IGL_{r-1}({\Bbb C})\cdot(m_1, m_2; v_0) ) )\;
   =\; \pi(\IGL_{r-1}\cdot (m_1, m_2; v_0))\times{\Bbb C}^r\,.
  $$
 Since
  $\pi(\GL_r({\Bbb C})\cdot(m_1, m_2; v_0))
   =\GL_r({\Bbb C})\cdot (m_1, m_2)$,
 one concludes that
  $$
   \pi( \GL_r({\Bbb C})\cdot (m_1, m_2; v_0)
         \cap (C_2M_r({\Bbb C})\times\{v_0\}) )\;
   =\; \GL_r({\Bbb C})\cdot (m_1, m_2)\,.
  $$
 This implies that
  the isomorphism
   $\pi: {\Bbb E}^{ss}(\chi)\cap (C_2M_r({\Bbb C})\times\{v_0\})
      \stackrel{\sim}{\rightarrow} \pi({\Bbb E}^{ss}(\chi))$
   is equivariant under the group homomorphism
   $\IGL_{r-1}({\Bbb C})(=\Stab(v_0)) \hookrightarrow \GL_r({\Bbb C})$
  with isomorphic orbit-spaces.
 The lemma follows.

\end{proof}

\smallskip

\begin{sremark}
{\rm [{\it characterization of $\pi({\Bbb E}^{ss}(\chi))$}].} {\rm
 The image $\pi({\Bbb E}^{ss}(\chi))$ in $C_2M_r({\Bbb C})$
  consists of $(m_1, m_2)$
  such that there exists a $v\in {\Bbb C}^r$
   with ${\Bbb C}[m_1, m_2]\cdot v = {\Bbb C}^r$.
 This happens if and only if
  the subalgebra $\langle {\mathbf 1}, m_1, m_2\rangle$
  of $\End({\Bbb C^r})$
   is isomorphic to ${\Bbb C}^r$ as ${\Bbb C}$-vector spaces
   (and, hence,
    as $\langle {\mathbf 1}, m_1, m_2\rangle$-modules as well).
}\end{sremark}

\smallskip

\begin{sremark}
 The induced morphism
  $({\Bbb A}^2)^{[r]}\rightarrow S^r({\Bbb A}^2)$
  through the construction is the Hilbert-Chow morphism.
\end{sremark}

\bigskip

\begin{flushleft}
{\bf D0-branes on $[{\Bbb A}^2/\Gamma]$  and
     an atlas for ${\mathfrak M}^{D0}_1([{\Bbb A}^2/\Gamma])$.}
\end{flushleft}
The $\Gamma$-action on ${\Bbb C}[z_1, z_2]$ induces
 a $\Gamma$-action on $C_2M_r({\Bbb C})$ via
 $$
  (m_1,m_2) \;
  \stackrel{\scriptsize
            \mbox{\raisebox{3ex}{
              $\gamma\,
               =\,\left(\begin{array}{cc} a&c\\ b&d \end{array} \right)$
             }}} {\longlongmapsto}\;
  (m_1,m_2)
  \left(\begin{array}{cc} a & c \\ b & d\end{array} \right)^t\;
   =:\; \gamma\odot (m_1, m_2).
 $$
In terms of analytic expression,
 regard $C_2M_r({\Bbb C})$ as a subset in the vector space
 $\End({\Bbb C}^r)\otimes {\Bbb C}^2$;
then,
 the $\Gamma$-action on ${\Bbb C}^2$ induces a $\Gamma$-action
 on $\End({\Bbb C}^r)\otimes {\Bbb C}^2$
 that leaves $C_2M_r({\Bbb C})$ invariant.
This gives the above action.
Here, ${\Bbb C}^2$ is identified with the vector space
 $\Span\{z_1, z_2\}$,
 with ${\Bbb A}^2=\Spec\Sym^{\bullet}({\Bbb C}^2)$.
Note that the $\Gamma$- and the $\GL_r({\Bbb C})$-action
 on $C_2M_r({\Bbb C})$ commute:
 $$
  \gamma\odot((g\cdot(m_1, m_2)))\;
   =\; g\cdot(\gamma\odot (m_1, m_2))
 $$
for $(m_1, m_2)\in C_2M_r({\Bbb C})$, $g\in \GL_r({\Bbb C})$,
 and $\gamma\in\Gamma$.
In terms of this action,
a rigidified $\Gamma$-${\cal O}_{{\Bbb A}^2}$-module
 $({\cal F},\iota)$ on the $\Gamma$-variety ${\Bbb A}^2$,
  where $\iota:{\Bbb C}^r\stackrel{\sim}{\rightarrow} H^0({\cal F})$,
 is given by a triple $(m_1, m_2; \rho)$,
 where $\rho:\Gamma^{\circ}\rightarrow \GL_r({\Bbb C})$
  is a representation of $\Gamma^{\circ}$,
  that satisfies the ${\cal O}_{{\Bbb A}^2}$-linearity condition
   of the $\Gamma$-action on ${\cal F}$,
  now expressed as
 $$
  \gamma\odot(m_1, m_2)\;=\; \rho(\gamma)\cdot(m_1, m_2)
   \hspace{1em}\mbox{for all $\gamma\in \Gamma$}\,.
 $$
Here $\Gamma^{\circ}$ is the dual group of $\Gamma$,
 which has the same elements as in $\Gamma$
  but with $\gamma_1\gamma_2$ in $\Gamma^{\circ}$ defined to be
   $\gamma_2\gamma_1$ in $\Gamma$.
The gluing condition of $({\cal F}.\iota)$ gives
 a correspondence $\rho:\Gamma^{\circ}\rightarrow \GL_r({\Bbb C})$
while the cocycle condition on the gluing imposes further
 that $\rho$ is a group-homomorphism.

Let
 $\Rep_{\Gamma^{\circ}}({\Bbb C}^r)\subset \prod_r\GL_r({\Bbb C})$
  be the representation variety of $\Gamma^{\circ}$ into $\GL_r({\Bbb C})$.
This is a $\GL_r({\Bbb C})$-scheme under
  $(\Ad_g\cdot\rho)(\gamma)=g\rho(\gamma) g^{-1}$ for $\gamma\in\Gamma$,
 where
  $\Ad_{\bullet}$ is the adjoint action of $\GL_r({\Bbb C})$ on itself.
The McKay correspondence gives
 a bijection between the set of equivalence classes of
  irreducible representations of $\Gamma$
  and the set of vertices of the extended Dynkin diagram
  associated to $\Gamma$.
Any other representation of $\Gamma$ is a direct sum of these
 irreducible representations.
It follows that $\Rep_{\Gamma^{\circ}}({\Bbb C}^r)$
 consists of finitely many $\GL_r({\Bbb C})$-orbits.
Let
 \begin{eqnarray*}
  C_2\Gamma M_r({\Bbb C})
   & :=
   & \{(m_1,m_2;\rho)\;:\;
      \gamma\odot(m_1, m_2)\;=\; \rho(\gamma)\cdot(m_1, m_2)
       \hspace{.6em}\mbox{for all $\gamma\in \Gamma$}\, \}\\
  & \subset
   & C_2M_r({\Bbb C})\times \Rep_{\Gamma^{\circ}}({\Bbb C}^r)
 \end{eqnarray*}
 with the subscheme structure the same as the subscheme structure
  defined by these algebraic constraint equations
  on $C_2M_r({\Bbb C})\times \Rep_{\Gamma^{\circ}}({\Bbb C}^r)$.
The tautological
 $\Gamma$-${\cal O}_{C_2\Gamma M_r({\Bbb C})\times{\Bbb A}^2}$-module
 on $C_2\Gamma M_r({\Bbb C})\times{\Bbb A}^2$
 defines a unique morphism
 $$
  P_{1,[{\Bbb A}^2/\Gamma]}\;:\;
   C_2\Gamma M_r({\Bbb C})\;
     \longrightarrow\; {\mathfrak M}^{D0}_1([{\Bbb A}^2/\Gamma])\,.
 $$
This gives an atlas of the Artin stack
 ${\mathfrak M}^{D0}_1([{\Bbb A}^2/\Gamma])$.
Change of rigidifications of $\Gamma$-${\cal O}_{{\Bbb A}^2}$-modules
  on ${\Bbb A}^2$
 induces a $\GL_r({\Bbb C})$-action on $C_2\Gamma M_r({\Bbb C})$
  by
  $$
   (m_1,m_2; \rho)\;
    \stackrel{g}{\longmapsto}\; (gm_1g^{-1}, gm_2g^{-1}; \Ad_g\cdot\rho)\,.
  $$
There is a bijection between
 the set of $\GL_r({\Bbb C})$-orbits in $C_2\Gamma M_r({\Bbb C})$  and
 the set of isomorphism classes of
  $\Gamma$-${\cal O}_{{\Bbb A}^2}$-modules of length $r$
  on the $\Gamma$-variety ${\Bbb A}^2$.

\bigskip

\begin{flushleft}
{\bf Resolution of the ADE orbifold singularity of
     ${\Bbb A}^2/\Gamma$
     via ${\mathfrak M}^{D0}_1([{\Bbb A}^2/\Gamma])$.}
\end{flushleft}
We now proceed to extract a $\GL_r({\Bbb C})$-invariant locus
 from the atlas $C_2\Gamma M_r({\Bbb C})$
 of ${\mathfrak M}^{D0}_1([{\Bbb A}^2/\Gamma])$
 whose geometric quotient gives the minimal smooth resolution
 of ${\Bbb A}^2/\Gamma$.

Recall the $\Gamma$-action on $C_2M_r({\Bbb C})$.
It leaves $\pi({\Bbb E}^{ss}(\chi))$ invariant
 and descends to the natural $\Gamma$-action on
 $\pi({\Bbb E}^{ss}(\chi))/\GL_r({\Bbb C})$
 via the canonical isomorphism with $({\Bbb A}^2)^{[r]}$.

\smallskip

\begin{stheorem}
{\bf [$\Gamma$-invariant subscheme].}
{\rm (Ginzburg-Kapranov, Ito-Nakamura,
      [Na: Theorem~4.1 and Theorem 4.4].)}
 Let $(({\Bbb A}^2)^{[r]})^{\Gamma}$ be the fixed-point locus
  of the $\Gamma$-action on $({\Bbb A}^2)^{[r]}$.
 Then
  $$
   (({\Bbb A}^2)^{[r]})^{\Gamma}\;
       =\;  W\,\amalg\,(\mbox{a finite set of points})\,,
  $$
  where $W$ is the minimal smooth resolution of
   ${\Bbb A}^2/\Gamma\simeq (S^r({\Bbb A}^2))^{\Gamma}$
   via the Hilbert-Chow morphism.
\end{stheorem}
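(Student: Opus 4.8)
The plan is to describe $(({\Bbb A}^2)^{[r]})^{\Gamma}$ set-theoretically, isolate its main component, identify that component with the $\Gamma$-Hilbert scheme, and then quote the resolution property of the latter from Ito--Nakamura / Ginzburg--Kapranov (i.e.\ [Na: Theorem~4.1, Theorem~4.4]). First I would unwind the fixed-point condition: a geometric point $[Z]\in({\Bbb A}^2)^{[r]}$ is $\Gamma$-fixed exactly when $Z\subset{\Bbb A}^2$ is a $\Gamma$-invariant $0$-dimensional closed subscheme of length $r$. Since $\Gamma\hookrightarrow\SU(2)$ acts on ${\Bbb A}^2$ with ${\mathbf 0}$ its only fixed point and freely on $U={\Bbb A}^2-\{{\mathbf 0}\}$, writing $Z=Z_{\mathbf 0}\amalg Z_U$ for the parts supported at ${\mathbf 0}$ and on $U$, $\Gamma$-invariance forces $Z_U$ to be a disjoint union of free $\Gamma$-orbits; each such orbit already has length $r=|\Gamma|$, so the length count leaves only two possibilities: either $Z=Z_U$ is a single reduced free orbit $\Gamma\cdot p$ with $p\in U$, or $Z=Z_{\mathbf 0}$ is entirely supported at ${\mathbf 0}$. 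Thus $(({\Bbb A}^2)^{[r]})^{\Gamma}$ is, as a set, the free-orbit locus $\cong U/\Gamma$ together with the locus $\Sigma$ of $\Gamma$-invariant length-$r$ subschemes concentrated at the origin; the same dichotomy applied to $0$-cycles shows $(S^r({\Bbb A}^2))^{\Gamma}\cong{\Bbb A}^2/\Gamma$ (the closed immersion ${\Bbb A}^2/\Gamma\hookrightarrow S^r({\Bbb A}^2)$, $[p]\mapsto\Gamma\cdot p$, has image the fixed locus), so the $\Gamma$-equivariant Hilbert--Chow morphism of Remark~2.8 restricts to a morphism $(({\Bbb A}^2)^{[r]})^{\Gamma}\to{\Bbb A}^2/\Gamma$.

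Next I would let $W$ be the closure of the free-orbit locus inside $(({\Bbb A}^2)^{[r]})^{\Gamma}$. On a free orbit, $H^0({\cal O}_{\Gamma\cdot p})$ is the regular representation ${\Bbb C}[\Gamma]$; since the character of $H^0({\cal O}_Z)$ (the trace of each $\gamma$, a sum of roots of unity) is a locally constant function on any flat family, every point of $W$ is a $\Gamma$-invariant $Z$ with $H^0({\cal O}_Z)\cong{\Bbb C}[\Gamma]$, i.e.\ a $\Gamma$-cluster. Conversely, by the Ito--Nakamura theorem that $\mathrm{Hilb}^{\Gamma}({\Bbb C}^2)$ is irreducible, every $\Gamma$-cluster lies in this closure; hence $W=\mathrm{Hilb}^{\Gamma}({\Bbb A}^2)$. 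Then, citing Ginzburg--Kapranov and Ito--Nakamura (packaged as [Na: Theorem~4.1]), $\mathrm{Hilb}^{\Gamma}({\Bbb A}^2)$ is a smooth connected surface and the induced map $\mathrm{Hilb}^{\Gamma}({\Bbb A}^2)\to{\Bbb A}^2/\Gamma$ is the minimal resolution of the Kleinian singularity ${\Bbb A}^2/\Gamma$, the exceptional divisor being the Dynkin configuration of $(-2)$-curves dictated by the McKay correspondence.

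It remains to see that $\Sigma\setminus W$ — which by the first paragraph and the rigidity argument consists precisely of the origin-supported $\Gamma$-invariant length-$r$ subschemes whose structure sheaf is a \emph{non-regular} $\Gamma$-representation — is a finite set of reduced points; this is exactly the content I would quote from [Na: Theorem~4.4]. The idea is to stratify the $\Gamma$-fixed locus of $\mathrm{Hilb}^{r}({\Bbb A}^2)$ by the isomorphism type $\bigoplus_i\rho_i^{\oplus v_i}$ of $H^0({\cal O}_Z)$ as a $\Gamma$-module: the stratum with $v_i=\dim\rho_i$ (the regular representation) is the two-dimensional piece $W$, while each remaining stratum is identified with a Nakajima quiver variety whose dimension formula forces it to be $0$-dimensional. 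I expect this finiteness/isolatedness claim to be the main obstacle: it is the only step that cannot be reduced to the bookkeeping of the first paragraph plus rigidity of finite-group representations, and it genuinely needs the quiver-variety (or an equivalent direct combinatorial) analysis of the non-principal components of the punctual Hilbert scheme. Everything else — the dichotomy, the identification $(S^r({\Bbb A}^2))^{\Gamma}\cong{\Bbb A}^2/\Gamma$, and the compatibility with Hilbert--Chow — is routine.
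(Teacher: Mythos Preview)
The paper does not give its own proof of this statement: Theorem~2.9 is stated as a quoted result, attributed to Ginzburg--Kapranov and Ito--Nakamura and cited as [Na: Theorem~4.1 and Theorem~4.4], and is then used as a black box in the subsequent corollary. So there is no ``paper's proof'' to compare against; your proposal is an expansion of what lies behind the citation rather than a parallel argument.

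As such an expansion, your sketch is essentially sound and follows the standard route. The set-theoretic dichotomy (free orbit in $U$ versus subscheme concentrated at ${\mathbf 0}$) is correct since $\Gamma\subset\SU(2)$ acts freely away from the origin and $r=|\Gamma|$. The rigidity argument identifying $W$ with the locus of $\Gamma$-clusters via constancy of the $\Gamma$-character of $H^0({\cal O}_Z)$ is the right mechanism, and you correctly note that the converse inclusion needs the irreducibility of $\mathrm{Hilb}^{\Gamma}({\Bbb A}^2)$, which is part of the Ito--Nakamura input. You also correctly flag the finiteness of the non-regular strata as the one substantive step requiring the quiver-variety dimension count (this is precisely [Na: Theorem~4.4]). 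One minor point: you should be slightly careful that $W$ really is a connected \emph{component} of the fixed locus (open as well as closed), not merely the closure of the free-orbit stratum; this follows once you know the regular-representation stratum is smooth of dimension~$2$ while the other strata are $0$-dimensional, so no extra component can meet $W$. With that caveat, your outline matches what the cited references actually prove.
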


\smallskip

Let $\widetilde{Z}\subset W\times {\Bbb A}^2$
 be the universal subscheme on $(W\times{\Bbb A}^2)/W$
 associated to $W$.
Then
 ${\cal O}_{\widetilde{Z}}$ is
  a $W$-family of ${\Gamma}$-${\cal O}_{{\Bbb A}^2}$-modules
  of length $r$ on ${\Bbb A}^2$ and, hence,
 defines a morphism
  $p^W: W\rightarrow {\mathfrak M}^{D0}_1([{\Bbb A}^2/\Gamma])$.
Consider the fibered product
 \begin{eqnarray*}
  \xymatrix{
   C_2\Gamma M_r({\Bbb C})
     \times_{P_{1,[{\Bbb A}^2/\Gamma]}\,,\,
            {\mathfrak M}^{D0}_1([{\Bbb A}^2/\Gamma])\,,\, p^W}
    W\ar[d]_{pr_1}\ar[rrrr]^{pr_2}
    &&&& W\ar[d]^{p^W}\\
  C_2\Gamma M_r({\Bbb C})\ar[rrrr]^{P_{1,[{\Bbb A}^2/\Gamma]}}
    &&&& {\mathfrak M}^{D0}_1([{\Bbb A}^2/\Gamma])
  }
 \end{eqnarray*}
 and
let
 $$
  C_2\Gamma M_r({\Bbb C})^{\circ} \;
  :=\; \pr_1\left(
        C_2\Gamma M_r({\Bbb C})
         \times_{P_{1,[{\Bbb A}^2/\Gamma]}\,,\,
                {\mathfrak M}^{D0}_1([{\Bbb A}^2/\Gamma])\,,\, p^W}
      W \right)\;
   \subset\; C_2\Gamma M_r({\Bbb C})\,.
 $$
Then, it follows from
 Theorem~2.9
 and the surjectivity of $\pr_2$ in the above fibered product diagram
 that:

\smallskip

\begin{scorollary}
{\bf [D-brane probe resolution of orbifold singulary].}
 The categorical quotient
  $C_2\Gamma M_r({\Bbb C})^{\circ}/\GL_r({\Bbb C})$
  is a geometric quotient and
 is isomorphic to $W$.
 Thus, the moduli stack ${\mathfrak M}^{D0}_1([{\Bbb A}^2/\Gamma])$ of
  D0-branes on the orbifold $[{\Bbb A}^2/\Gamma]$ of stacky type $1$
  \`{a} la Polchinski-Grothendieck Ansatz contains
  a substack
   whose associated coarse moduli space is isomorphic to
   the minimal resolution $\widetilde{{\Bbb A}^2/\Gamma}$
   of the variety ${\Bbb A}^2/\Gamma$
   with an ADE orbifold singularity.
\end{scorollary}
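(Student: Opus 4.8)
\medskip
\noindent{\it Proof proposal.}\
The plan is to exploit the fact, set up in the preceding subsection, that
$P_{1,[{\Bbb A}^2/\Gamma]}\colon C_2\Gamma M_r({\Bbb C})\to{\mathfrak M}^{D0}_1([{\Bbb A}^2/\Gamma])$ is a $\GL_r({\Bbb C})$-torsor presenting ${\mathfrak M}^{D0}_1([{\Bbb A}^2/\Gamma])$ as the quotient stack $[C_2\Gamma M_r({\Bbb C})/\GL_r({\Bbb C})]$, whose $\GL_r({\Bbb C})$-orbits are exactly the isomorphism classes of length-$r$ $\Gamma$-${\cal O}_{{\Bbb A}^2}$-modules. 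Granting this, the first step is to observe that the fibered product
$Q:=C_2\Gamma M_r({\Bbb C})\times_{P_{1,[{\Bbb A}^2/\Gamma]},\,{\mathfrak M}^{D0}_1([{\Bbb A}^2/\Gamma]),\,p^W}W$
is the pull-back along $p^W$ of this torsor, so that $\pr_2\colon Q\to W$ is again a principal $\GL_r({\Bbb C})$-bundle; in particular it is faithfully flat and surjective -- this is the ``surjectivity of $\pr_2$'' in the one-line justification -- and $W$ is a geometric quotient $Q/\GL_r({\Bbb C})$.

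Next I would analyse the $\GL_r({\Bbb C})$-equivariant morphism $\pr_1\colon Q\to C_2\Gamma M_r({\Bbb C})$, whose image is $C_2\Gamma M_r({\Bbb C})^{\circ}$ by definition. Since the center ${\Bbb C}^{\times}=Z(\GL_r({\Bbb C}))$ acts trivially on $C_2\Gamma M_r({\Bbb C})$ (conjugation and $\Ad$ are trivial on scalars), $\pr_1$ factors through $Q/{\Bbb C}^{\times}$. The key computation is that for a point $(m_1,m_2;\rho)$ in the image -- equivalently, one arising from a $\Gamma$-cluster $Z$ parametrised by $W$, so that $\langle{\mathbf 1},m_1,m_2\rangle$ is a cyclic commutative subalgebra of $\End({\Bbb C}^r)$ of dimension $r$ and $\rho$ realises the induced $\Gamma$-action on ${\Bbb C}^r\cong{\cal O}_Z$ -- one has $\Stab(m_1,m_2;\rho)={\Bbb C}^{\times}$: an element fixing $(m_1,m_2)$ lies in the commutant of $\langle{\mathbf 1},m_1,m_2\rangle$, which for a commutative algebra acting cyclically on itself equals that algebra, and commuting in addition with $\rho(\Gamma)$ forces it into the $\Gamma$-invariants, which reduce to the scalars because the underlying $\Gamma$-representation of ${\cal O}_Z$ is the regular one (Ito--Nakamura). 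Hence $\pr_1(q)=\pr_1(q')$ iff $q'\in{\Bbb C}^{\times}\!\cdot q$, so $Q/{\Bbb C}^{\times}\to C_2\Gamma M_r({\Bbb C})$ is a locally closed immersion onto $C_2\Gamma M_r({\Bbb C})^{\circ}$, exhibiting the latter as a $\GL_r({\Bbb C})$-invariant locally closed subscheme of the atlas (it is cut out by the open condition that the module be cyclic over ${\cal O}_{{\Bbb A}^2}$ together with the closed condition that the associated cluster lie on the component $W$ of Theorem~2.9).

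Finally I would assemble the quotients. Quotienting the principal $\GL_r({\Bbb C})$-bundle $Q\to W$ by the free action of the central subgroup ${\Bbb C}^{\times}$ exhibits $C_2\Gamma M_r({\Bbb C})^{\circ}=Q/{\Bbb C}^{\times}\to W$ as a principal $(\GL_r({\Bbb C})/{\Bbb C}^{\times})$-bundle; hence the $\GL_r({\Bbb C})$-action on $C_2\Gamma M_r({\Bbb C})^{\circ}$ -- which factors through $\GL_r({\Bbb C})/{\Bbb C}^{\times}$ -- has closed orbits, all of dimension $r^2-1$ with stabiliser ${\Bbb C}^{\times}$, and $C_2\Gamma M_r({\Bbb C})^{\circ}/\GL_r({\Bbb C})$ is a geometric quotient canonically isomorphic to $Q/\GL_r({\Bbb C})\cong W$. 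Since $C_2\Gamma M_r({\Bbb C})^{\circ}$ is a $\GL_r({\Bbb C})$-invariant locally closed subscheme of the atlas, $[C_2\Gamma M_r({\Bbb C})^{\circ}/\GL_r({\Bbb C})]$ is a substack of ${\mathfrak M}^{D0}_1([{\Bbb A}^2/\Gamma])$ with coarse moduli space $W\cong\widetilde{{\Bbb A}^2/\Gamma}$ by Theorem~2.9, which is the assertion. The main obstacle I anticipate is the bookkeeping around the ever-present central ${\Bbb C}^{\times}$-automorphism: $p^W$ is \emph{not} a monomorphism into the stack but realises $W$ only as the coarse space of a ${\Bbb C}^{\times}$-gerbe, so one must verify that the $\GL_r({\Bbb C})$-quotient of $C_2\Gamma M_r({\Bbb C})^{\circ}$ is \emph{geometric} (and equals $W$ on the nose) by passing to the effective $\GL_r({\Bbb C})/{\Bbb C}^{\times}$-action, and in particular that $C_2\Gamma M_r({\Bbb C})^{\circ}$ is genuinely locally closed -- not merely constructible -- in $C_2\Gamma M_r({\Bbb C})$.
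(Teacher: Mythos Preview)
Your proposal is correct and follows the same route the paper has in mind: the paper's own justification is literally the one line ``it follows from Theorem~2.9 and the surjectivity of $\pr_2$ in the above fibered product diagram,'' and what you have written is a careful unpacking of exactly that sentence --- identifying $Q\to W$ as a $\GL_r({\Bbb C})$-torsor (whence $\pr_2$ is surjective), computing the stabiliser on $C_2\Gamma M_r({\Bbb C})^{\circ}$ via the Ito--Nakamura regular-representation input of Theorem~2.9, and passing to the effective $\GL_r({\Bbb C})/{\Bbb C}^{\times}$-action to see a geometric quotient equal to $W$. The bookkeeping around the central ${\Bbb C}^{\times}$ that you flag is real but you handle it correctly; the paper simply suppresses all of this.
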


\smallskip

\begin{sremark}
{\rm [{\it characterization of $C_2\Gamma M_r({\Bbb C})^{\circ}$}].}
{\rm
 It follows from Lemma~2.6
  that the forgetful morphism
   $C_2\Gamma M_r({\Bbb C})\rightarrow C_2M_r({\Bbb C})$
   sends $C_2\Gamma M_r({\Bbb C})^{\circ}$
   onto $\pi({\Bbb E}^{ss}(\chi))$.
 It follows from [Na: Theorem~4.4] and
   Remark~2.7
  that a geometric point in
   $C_2\Gamma M_r({\Bbb C})^{\circ}/\GL_r({\Bbb C})$
   corresponds to an isomorphism class of morphisms
   $$
    \varphi\;:\;
    (\pt^{A\!z}, {\Bbb C}^r)\,
    :=\, ((\Spec{\Bbb C}, \End({\Bbb C}^r)),  {\Bbb C}^r)\;
     \longrightarrow\; [{\Bbb A}^2/\Gamma]
   $$
   from the Azumaya point with the fundamental module
   to the orbifold that satisfy:
    \begin{itemize}
     \item[$\cdot$]
      the induced $\Gamma$-action/representation on the Chan-Paton module
       ${\Bbb C}^r$ is the regular representation;

     \item[$\cdot$]
      there exists a $\Gamma$-fixed element $v_0\in {\Bbb C}^r$
       such that
        $H^0({\cal O}_{pt_{\varphi}})\cdot v_0={\Bbb C}^r$,
       where $\pt_{\varphi}$, a $0$-dimensional scheme of length $r$,
        is the surrogate of $\pt^{A\!z}$ associated to $\varphi$.
    \end{itemize}
   And vice versa.
 In particular, any morphism
  $\varphi:(\pt^{A\!z},{\Bbb C}^r)\rightarrow [{\Bbb A}^2/\Gamma]$
  of stacky type $1$ whose image is not the orbifold-point
   with structure group $\Gamma$
  satisfies the above conditions and
 its isomorphism class is contained in
  $C_2\Gamma M_r({\Bbb C})^{\circ}/\GL_r({\Bbb C})$.
}\end{sremark}

\smallskip

\begin{sremark}
{\rm [{\it movable vs.\ unmovable morphism/D-branes}].}
{\rm
 Morphisms associated to geometric points in
  $C_2\Gamma M_r({\Bbb C})^{\circ}$ are movable
  in the sense that they can be deformed to have
  the image D-brane anywhere on $[{\Bbb A}^2/\Gamma]$.
 However, in general, there are morphisms from
  D0-branes to $[{\Bbb A}^2/\Gamma]$
  that are trapped at the orbifold-point
   that corresponds to the singular point of ${\Bbb A}^2/\Gamma$.
 Here, ``trapped" means that these morphisms cannot be deformed
  to make the image D-brane move away from this orbifold point.
 It is unknown to us
  whether such trapped/unmovable D-branes at the singularity
   have any stringy significance/implication.
}\end{sremark}

\newpage
\baselineskip 13pt
{\footnotesize

}

\end{document}